\definecolor{darkgreen}{rgb}{0,0.45,0}
 \DeclareMathOperator{\ob}{ob}
\newcommand{\cat}[1]{\mathbf{#1}}
\newcommand{\op}{\mathrm{op}}
\newcommand{\thg}{{\mathord{\text{--}}}}
\newcommand{\spn}[1]{{\langle{#1}\rangle}}
\newcommand{\cd}[2][]{\vcenter{\hbox{\xymatrix#1{#2}}}}
\renewcommand{\phi}{\varphi}
\newcommand{\A}{{\mathcal A}}
\newcommand{\B}{{\mathcal B}}
\newcommand{\C}{{\mathcal C}}
\newcommand{\D}{{\mathcal D}}
\renewcommand{\L}{{\mathcal L}}
\newcommand{\M}{{\mathcal M}}
\newcommand{\N}{{\mathcal N}}
\renewcommand{\P}{{\mathcal P}}
\newcommand{\Ss}{{\mathcal S}}
\newcommand{\T}{{\mathcal T}}
\newcommand{\V}{{\mathcal V}}
\newcommand{\W}{{\mathcal W}}
\newcommand{\xtor}[1]{\cdl[@1]{{} \ar[r]|-{\object@{|}}^{#1} & {}}}
\def\hookleftarrowfill@{\arrowfill@\leftarrow\relbar{\relbar\joinrel\rhook}}
\def\twoheadleftarrowfill@{\arrowfill@\twoheadleftarrow\relbar\relbar}
\def\leftbararrowfill@{\arrowdoublefill@{\leftarrow\mkern-5mu}\relbar\mapstochar\relbar\relbar}
\def\Leftbararrowfill@{\arrowdoublefill@{\Leftarrow\mkern-2mu}\Relbar\Mapstochar\Relbar\Relbar}
\def\leftringarrowfill@{\arrowdoublefill@{\leftarrow\mkern-3mu}\relbar{\mkern-3mu\circ\mkern-2mu}\relbar\relbar}
\def\lefttriarrowfill@{\arrowfill@{\mathrel\triangleleft\mkern0.5mu\joinrel\relbar}\relbar\relbar}
\def\Lefttriarrowfill@{\arrowfill@{\mathrel\triangleleft\mkern1mu\joinrel\Relbar}\Relbar\Relbar}
\def\hookrightarrowfill@{\arrowfill@{\lhook\joinrel\relbar}\relbar\rightarrow}
\def\twoheadrightarrowfill@{\arrowfill@\relbar\relbar\twoheadrightarrow}
\def\rightbararrowfill@{\arrowdoublefill@{\relbar\mkern-0.5mu}\relbar\mapstochar\relbar\rightarrow}
\def\Rightbararrowfill@{\arrowdoublefill@{\Relbar\mkern-2mu}\Relbar\Mapstochar\Relbar\Rightarrow}
\def\rightringarrowfill@{\arrowdoublefill@\relbar\relbar{\mkern-2mu\circ\mkern-3mu}\relbar{\mkern-3mu\rightarrow}}
\def\righttriarrowfill@{\arrowfill@\relbar\relbar{\relbar\joinrel\mkern0.5mu\mathrel\triangleright}}
\def\Righttriarrowfill@{\arrowfill@\Relbar\Relbar{\Relbar\joinrel\mkern1mu\mathrel\triangleright}}
\def\leftrightarrowfill@{\arrowfill@\leftarrow\relbar\rightarrow}
\def\mapstofill@{\arrowfill@{\mapstochar\relbar}\relbar\rightarrow}
\renewcommand*\xleftarrow[2][]{\ext@arrow 20{20}0\leftarrowfill@{#1}{#2}}
\providecommand*\xLeftarrow[2][]{\ext@arrow 60{22}0{\Leftarrowfill@}{#1}{#2}}
\providecommand*\xhookleftarrow[2][]{\ext@arrow 10{20}0\hookleftarrowfill@{#1}{#2}}
\providecommand*\xtwoheadleftarrow[2][]{\ext@arrow 60{20}0\twoheadleftarrowfill@{#1}{#2}}
\providecommand*\xleftbararrow[2][]{\ext@arrow 10{22}0\leftbararrowfill@{#1}{#2}}
\providecommand*\xLeftbararrow[2][]{\ext@arrow 50{24}0\Leftbararrowfill@{#1}{#2}}
\providecommand*\xleftringarrow[2][]{\ext@arrow 10{26}0\leftringarrowfill@{#1}{#2}}
\providecommand*\xlefttriarrow[2][]{\ext@arrow 80{24}0\lefttriarrowfill@{#1}{#2}}
\providecommand*\xLefttriarrow[2][]{\ext@arrow 80{24}0\Lefttriarrowfill@{#1}{#2}}
\renewcommand*\xrightarrow[2][]{\ext@arrow 01{20}0\rightarrowfill@{#1}{#2}}
\providecommand*\xRightarrow[2][]{\ext@arrow 04{22}0{\Rightarrowfill@}{#1}{#2}}
\providecommand*\xhookrightarrow[2][]{\ext@arrow 00{20}0\hookrightarrowfill@{#1}{#2}}
\providecommand*\xtwoheadrightarrow[2][]{\ext@arrow 03{20}0\twoheadrightarrowfill@{#1}{#2}}
\providecommand*\xrightbararrow[2][]{\ext@arrow 01{22}0\rightbararrowfill@{#1}{#2}}
\providecommand*\xRightbararrow[2][]{\ext@arrow 04{24}0\Rightbararrowfill@{#1}{#2}}
\providecommand*\xrightringarrow[2][]{\ext@arrow 01{26}0\rightringarrowfill@{#1}{#2}}
\providecommand*\xrighttriarrow[2][]{\ext@arrow 07{24}0\righttriarrowfill@{#1}{#2}}
\providecommand*\xRighttriarrow[2][]{\ext@arrow 07{24}0\Righttriarrowfill@{#1}{#2}}
\providecommand*\xmapsto[2][]{\ext@arrow 01{20}0\mapstofill@{#1}{#2}}
\providecommand*\xleftrightarrow[2][]{\ext@arrow 10{22}0\leftrightarrowfill@{#1}{#2}}
\providecommand*\xLeftrightarrow[2][]{\ext@arrow 10{27}0{\Leftrightarrowfill@}{#1}{#2}}
\newcommand{\twocong}[2][0.5]{\ar@{}[#2] \save ?(#1)*{\cong}\restore}
\newcommand{\twoeq}[2][0.5]{\ar@{}[#2] \save ?(#1)*{=}\restore}
\newcommand{\rtwocell}[3][0.5]{\ar@{}[#2] \ar@{=>}?(#1)+/l 0.2cm/;?(#1)+/r 0.2cm/^{#3}}
\newcommand{\ltwocell}[3][0.5]{\ar@{}[#2] \ar@{=>}?(#1)+/r 0.2cm/;?(#1)+/l 0.2cm/^{#3}}
\newcommand{\ltwocello}[3][0.5]{\ar@{}[#2] \ar@{=>}?(#1)+/r 0.2cm/;?(#1)+/l 0.2cm/_{#3}}
\newcommand{\dtwocell}[3][0.5]{\ar@{}[#2] \ar@{=>}?(#1)+/u  0.2cm/;?(#1)+/d 0.2cm/^{#3}}
\newcommand{\dltwocell}[3][0.5]{\ar@{}[#2] \ar@{=>}?(#1)+/ur  0.2cm/;?(#1)+/dl 0.2cm/^{#3}}
\newcommand{\drtwocell}[3][0.5]{\ar@{}[#2] \ar@{=>}?(#1)+/ul  0.2cm/;?(#1)+/dr 0.2cm/^{#3}}
\newcommand{\dthreecell}[3][0.5]{\ar@{}[#2] \ar@3{->}?(#1)+/u  0.2cm/;?(#1)+/d 0.2cm/^{#3}}
\newcommand{\utwocell}[3][0.5]{\ar@{}[#2] \ar@{=>}?(#1)+/d 0.2cm/;?(#1)+/u 0.2cm/_{#3}}
\newcommand{\dtwocelltarg}[3][0.5]{\ar@{}#2 \ar@{=>}?(#1)+/u  0.2cm/;?(#1)+/d 0.2cm/^{#3}}
\newcommand{\utwocelltarg}[3][0.5]{\ar@{}#2 \ar@{=>}?(#1)+/d  0.2cm/;?(#1)+/u 0.2cm/_{#3}}
\theoremstyle{definition}
\theoremstyle{plain}
\newtheorem{Prop}[subsection]{Proposition}
\numberwithin{equation}{section}
\theoremstyle{definition}
\newtheorem{Rk}[subsection]{Remark}
\newcommand{\Lan}{\mathrm{Lan}}
\begin{document}
\leftmargini=2em \title{Lawvere theories, finitary monads and
  Cauchy-completion} \subjclass[2000]{Primary: 18D05, 18C15}
\author{Richard Garner} \address{Department of Computing, Macquarie
  University, NSW 2109, Australia} \email{richard.garner@mq.edu.au}

\date{\today}

\thanks{This work was supported by the Australian Research Council's
  \emph{Discovery Projects} scheme, grant number DP110102360.}

\begin{abstract}
  We consider the equivalence of Lawvere theories and finitary monads
  on $\cat{Set}$ from the perspective of
  $\cat{End}_f(\cat{Set})$-enriched category theory, where
  $\cat{End}_f(\cat{Set})$ is the category of finitary endofunctors of
  $\cat{Set}$. We identify finitary monads with one-object
  $\cat{End}_f(\cat{Set})$-categories, and ordinary categories
  admitting finite powers (i.e., $n$-fold products of each object with
  itself) with $\cat{End}_f(\cat{Set})$-categories admitting a certain
  class $\Phi$ of absolute colimits; we then show that, from this
  perspective, the passage from a finitary monad to the associated
  Lawvere theory is given by completion under $\Phi$-colimits. We also
  account for other phenomena from the enriched viewpoint: the
  equivalence of the algebras for a finitary monad with the models of the
  corresponding Lawvere theory; the functorial semantics in arbitrary
  categories with finite powers; and the existence of left
  adjoints to algebraic functors.

  % The category of
  % algebras for a finitary monad, or category of models of a Lawvere
  % theory, may be identified as the category of $[\cat F,
  % \cat{Set}]$-functors and transformations from the corresponding
  % $[\cat F, \cat{Set}]$-category to the $\Phi$-cocomplete $[\cat F,
  % \cat{Set}]$-category of sets; and so the equivalence of the algebras
  % of a finitary monad with the models of the corresponding Lawvere
  % theory follows is simply the universal property of
  % $\Phi$-cocompletion.
\end{abstract}

\maketitle

\section{Introduction}
At the heart of universal algebra is the notion of \emph{equational
  theory}---a first-order theory whose axioms are exclusively of the
form $(\forall x_1) \dots (\forall x_n) (\sigma = \tau)$. There is an
elegant category-theoretic treatment of equational theories due to
Lawvere~\cite{Lawvere1963Functorial}: for each equational theory $\T$,
one may define a category $\cat T$ with finite products such that
models of $\T$ correspond to finite-product-preserving functors $\cat
T \to \cat{Set}$. The objects of $\cat T$ are the distinct finite
powers $X^n$ of a fixed object $X$, whilst morphisms $X^n \to X^m$
are $m$-tuples of derived $n$-ary operations of
$\T$;  a category of this form is called a \emph{Lawvere theory}.

A second way of treating equational theories categorically is using
monads: to each equational theory $\T$, we associate a monad $\mathsf
T$ on the category of sets whose algebras are exactly the $\T$-models.
The value of $\mathsf T$ at a set $X$ is given by the set of derived
terms of the theory with free variables from $X$. Since each derived
term involves only finitely many variables, the action of $\mathsf T$
is entirely determined by its behaviour on finite sets; formally, this
says that the monad $\mathsf T$ is \emph{finitary}, in the sense that
its underlying endofunctor preserves filtered colimits.
%  There are
% two principal ways that equational theories can be studied
% category-theoretically. On the one hand, we have the category
% $\cat{Law}$ of Lawvere theories; recall that a \emph{Lawvere
%   theory}~\cite{Lawvere1963Functorial} is a category with finite
% products whose objects are the distinct finite powers $X^n$ of some
% fixed object $X$. On the other hand, we have the category
% $\cat{Mnd}_f(\cat{Set})$ of finitary monads on $\cat{Set}$; recall
% that a functor is called \emph{finitary} if preserves filtered
% colimits~\cite[Chapter~IX]{Mac-Lane1971Categories}, and that a monad
% is finitary if its underlying endofunctor is so. Each equational
% theory $\T$ gives rise to a Lawvere theory $\cat T$ and a finitary
% monad $\mathsf T$, and models of the equational theory $\T$ may be
% identified either with models of the Lawvere theory $\cat T$, or with
% algebras of the monad $\mathsf T$; here, a \emph{model} of a Lawvere
% theory $\cat T$ is a finite-product-preserving functor $\cat T \to
% \cat{Set}$.

The formulations in terms of finitary monads and Lawvere theories are
equivalent; more precisely, we have an equivalence of categories
\begin{equation}\label{eq:1}
   \cat{Mnd}_f(\cat{Set}) \simeq \cat{Law}
\end{equation}
which commutes (to within pseudonatural equivalence) with the functors
sending a finitary monad to its category of algebras, and a Lawvere
theory to its category of models (a \emph{model} of a Lawvere theory
$\cat T$ being, as above, a finite-product-preserving functor $\cat T
\to \cat{Set}$). This equivalence was essentially established by
Linton in~\cite{Linton1966Some}; for modern treatments the reader
could consult, for instance,~\cite{Barr1985Toposes}
or~\cite{Power1999Enriched}; for a historical overview,
see~\cite{Hyland2007The-Category}.

The equivalence~\eqref{eq:1} has been extended in many
directions~\cite{Mac-Lane1965Categorical,Power1999Enriched,Nishizawa2009Lawvere,Lack2011Notions,Berger2012Monads}
to deal with other notions of algebraic structure: for example, ones with different
kinds of arities for the operations, with different rules for handling
variable contexts, or with different objects than sets bearing the
structure. Yet as natural as these extensions are, they do not offer a
compelling explanation as to why the monad--theory
correspondence should exist in the first place. 

This article will attempt such an explanation, making use of a
seemingly unrelated insight of Lawvere: his description
in~\cite{Lawvere1973Metric} of metric spaces as enriched categories in
the sense of~\cite{Kelly1982Basic}. His treatment emphasises
particularly the process of completing an enriched category under
\emph{absolute} colimits, those colimit-types which are preserved by
any functor. Applied to a metric space, seen as an enriched category,
this completion yields the classical Cauchy completion, and the name
\emph{Cauchy-completion} has subsequently come to refer to the
completion of any kind of enriched category under absolute colimits. A
notable application of these ideas is~\cite{Walters1982Sheaves}, which
identifies sheaves on a given site with certain Cauchy-complete
categories enriched over an associated bicategory $\B$. Our
application will use Cauchy-completion over a suitable
enrichment base to explain the monad--theory correspondence.

%  In this
% paper, we will describe another kind of enriched category
% %  us, however, the relevance of
% % Lawvere's insights into Cauchy-completion is the following.
% %  in this paper, we shall show that
% % they also offer an explanation for 
% % The purpose of this paper is to show that Lawvere's insights into
% % Cauchy-completion algebraic theories are closely related to those on Cauchy-completion
% %of enriched categories. We will describe a particular species of
% of which every finitary monad on $\cat{Set}$ and every Lawvere theory
% is an instance, and will show that, in these terms, the passage from a
% finitary monad to its associated Lawvere theory is mediated by
% Cauchy-completion.

In more detail, we will consider categories enriched in
$\cat{End}_f(\cat{Set})$, the category of finitary endofunctors of
$\cat{Set}$ with its compositional monoidal structure.  Amongst the
totality of such enriched categories, we find:
\begin{enumerate}
\item Every finitary monad $\mathsf T$ on the category of sets; and
\item Every ordinary category with finite powers, so in particular:
  \begin{enumerate}
  \item Every Lawvere theory $\cat T$;
  \item The category $\cat{Set}$ of sets.
  \end{enumerate}
\end{enumerate}

On the one hand, finitary monads as in (1) are precisely monoids in
$\cat{End}_f(\cat{Set})$, thus, one-object
$\cat{End}_f(\cat{Set})$-categories; on the other, we will identify
ordinary categories as in (2) with the
$\cat{End}_f(\cat{Set})$-categories admitting a certain class $\Phi$
of absolute colimits.  The $\cat{End}_f(\cat{Set})$-categories of the
form (2)---which we call \emph{representable}---are reflective amongst
all $\cat{End}_f(\cat{Set})$-categories, with reflector given by
Cauchy-completion with respect to the class $\Phi$; and the key to our
reconstruction of the equivalence~\eqref{eq:1} is that this
Cauchy-completion applied to a finitary monad $\mathsf T$ yields
precisely the associated Lawvere theory $\cat T$.

This perspective also explains the interaction of the
equivalence~\eqref{eq:1} with models.  On viewing a finitary monad
$\mathsf T$ or a Lawvere theory $\cat T$ as an
$\cat{End}_f(\cat{Set})$-category, we find that the categories
$\cat{Alg}(\mathsf T)$ and $\cat{Mod}(\cat T)$ of algebras or models
are the respective categories of $\cat{End}_f(\cat{Set})$-functors
from $\mathsf T$ or $\cat T$ into $\cat{Set}$.  Since the
$\cat{End}_f(\cat{Set})$-category of sets is representable, the
universal property of the representable reflection asserts the
equivalence
\[ \cat{Alg}(\mathsf T) = \cat{End}_f(\cat{Set})\text-\cat{CAT}(\mathsf T, \cat{Set}) \simeq
\cat{End}_f(\cat{Set})\text-\cat{CAT}(\cat T, \cat{Set}) =
\cat{Mod}(\cat T)
\]
of the algebras of the monad with the models of the theory.

We will account for two further phenomena from the
enriched-categorical perspective. The first is the possibility of
taking models in categories other than $\cat{Set}$. This is most
easily understood in the formulation using Lawvere theories: a model
of a Lawvere theory $\cat T$ can be defined in any category $\C$ with
finite powers as a finite-power-preserving functor $\cat T \to \C$. On
the face of it, it is less clear how to take algebras in $\C$ of a
finitary monad $\mathsf T$ in a way that is functorial in $\mathsf T$
and $\C$. This is where the enriched perspective is superior: both
models of a Lawvere theory and algebras of a finitary monad in $\C$
may be defined with equal simplicity as
$\cat{End}_f(\cat{Set})$-enriched functors from the theory or the
monad into $\C$, seen as an $\cat{End}_f(\cat{Set})$-enriched category.

The second further point we consider is the construction of left
adjoints to algebraic functors. Taking the Lawvere theory perspective,
an \emph{algebraic functor} is a functor $\cat{Mod}(\cat T, \C) \to
\cat{Mod}(\cat S, \C)$ induced by composition with a map $\cat S \to
\cat T$ of Lawvere theories. It is known that such functors have left
adjoints under rather general circumstances
(see~\cite{Kelly1980A-unified}, for example);
we will consider when they may be constructed by
$\cat{End}_f(\cat{Set})$-enriched left Kan extension. It turns out
that this is the case just when unenriched left Kan extensions along
the ordinary functor $\cat S \to \cat T$ exist and distribute
appropriately over finite powers.

In this article, we have only considered the classical correspondence
between finitary monads and Lawvere theories; but each of the
generalisations of the monad--theory correspondence listed above
should also arise in this manner on replacing $\cat{End}_f(\cat{Set})$
by some other appropriate monoidal category or bicategory of
endofunctors; in future work with Hyland, we will study generalised
monad--theory correspondences using enrichment over the Kleisli
bicategories of~\cite{Fiore2008The-cartesian,Hyland2013Elements}.

\section{Finitary monads and their algebras via enriched categories}
\label{sec:finit-monads-their}
\subsection{Finitary monads as $[\cat F, \cat{Set}]$-categories}
In this section, we describe finitary monads on $\cat{Set}$ and their
algebras from the perspective of enriched category theory. As in the
introduction, our base for enrichment will be the category of finitary
endofunctors of $\cat{Set}$; however,
following~\cite[Section~4]{Kelly1993Adjunctions}, we will find it
convenient to work not with $\cat{End}_f(\cat{Set})$ itself, but with
an equivalent and more elementary category.

Let $\cat F$ denote the full subcategory of $\cat{Set}$ spanned by the
finite cardinals. $\cat{F}$ is in fact the free category with finite
colimits on $1$, and so by~\cite[Proposition~5.41]{Kelly1982Basic} the
inclusion $I \colon \cat{F} \to \cat{Set}$ exhibits $\cat{Set}$ as the
free completion of $\cat{F}$ under filtered colimits.  Restriction and
left Kan extension along $I$ thus exhibits $\cat{End}_f(\cat{Set})$ as
equivalent to the functor category $[\cat F, \cat{Set}]$.  The
compositional monoidal structure of $\cat{End}_f(\cat{Set})$
transports across the equivalence to yield a monoidal structure on
$[\cat F, \cat{Set}]$ whose unit object is the inclusion $I$, and
whose tensor product is defined as on the left in:
\[
(A \otimes B)(n) = \textstyle\int^{m \in \cat F} Am \times (Bn)^m
\qquad \quad [B,C](m) = \textstyle\int_{n \in \cat F} [(Bn)^m,
Cn]\rlap{ .}
\]
We record for future use that this monoidal structure is non-symmetric
and \emph{right closed}, meaning that each $(\thg) \otimes B \colon
[\cat F, \cat{Set}] \to [\cat F, \cat{Set}]$ admits a right adjoint
$[B, \thg]$, defined as on the right above. The category of monoids in
$[\cat F, \cat{Set}]$ is, of course, equivalent to the category of
monoids in $\cat{End}_f(\cat{Set})$ and so to the category of finitary
monads on $\cat{Set}$, and so we have:
\begin{Prop}\label{prop:mndf}
  The category $\cat{Mnd}_f(\cat{Set})$ of finitary monads on
  $\cat{Set}$ is equivalent to the category of one-object $[\cat F,
  \cat{Set}]$-categories.
\end{Prop}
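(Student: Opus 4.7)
The plan is to assemble the statement from a short chain of equivalences, all of which have been flagged in the surrounding discussion. First I would recall that for any monoidal category $\V$, there is an evident isomorphism of categories between monoids in $\V$ and one-object $\V$-enriched categories: a one-object $\V$-category with sole object $\star$ is exactly the data of a hom-object $\V(\star,\star)$ equipped with composition $\V(\star,\star)\otimes\V(\star,\star)\to\V(\star,\star)$ and unit $I\to\V(\star,\star)$ satisfying the $\V$-category axioms, which are precisely the monoid axioms; and morphisms of one-object $\V$-categories correspond to monoid homomorphisms. Applying this to $\V=[\cat F,\cat{Set}]$ with the tensor product displayed just above the proposition reduces the statement to exhibiting an equivalence $\cat{Mnd}_f(\cat{Set})\simeq\cat{Mon}([\cat F,\cat{Set}])$.

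Second, I would appeal to the equivalence of monoidal categories $\cat{End}_f(\cat{Set})\simeq[\cat F,\cat{Set}]$ obtained just before the statement by restriction and left Kan extension along the inclusion $I\colon\cat F\to\cat{Set}$. Crucially, this equivalence has been set up so as to transport the compositional monoidal structure on $\cat{End}_f(\cat{Set})$ to the tensor product on $[\cat F,\cat{Set}]$ written above the proposition; consequently the equivalence preserves and reflects monoid structure, yielding $\cat{Mon}(\cat{End}_f(\cat{Set}))\simeq\cat{Mon}([\cat F,\cat{Set}])$.

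Finally, a monoid in $\cat{End}_f(\cat{Set})$ with respect to composition is by definition a finitary endofunctor of $\cat{Set}$ together with unit and multiplication natural transformations satisfying the monad axioms, i.e., a finitary monad on $\cat{Set}$; and monad morphisms correspond to monoid homomorphisms. Composing the three equivalences yields the proposition.

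The main (minor) obstacle is the verification that the equivalence $\cat{End}_f(\cat{Set})\simeq[\cat F,\cat{Set}]$ is genuinely \emph{monoidal}, rather than merely an equivalence of underlying categories; but this is essentially forced by the construction, since the tensor product on $[\cat F,\cat{Set}]$ was defined by transport of structure. Everything else is formal category theory and I would state it at that level of generality rather than grind through explicit descriptions of units, multiplications, or Kan-extension formulas.
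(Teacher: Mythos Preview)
Your proposal is correct and follows precisely the paper's own argument: the proposition is stated in the paper as an immediate consequence of the sentence preceding it, which observes that monoids in $[\cat F,\cat{Set}]$ are equivalent to monoids in $\cat{End}_f(\cat{Set})$ (by transport of the monoidal structure) and hence to finitary monads on $\cat{Set}$, together with the tacit identification of monoids with one-object enriched categories. Your write-up simply makes these steps explicit, and your remark that the monoidal nature of the equivalence is forced by the transport-of-structure definition of the tensor on $[\cat F,\cat{Set}]$ is exactly the point.
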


\subsection{Monad algebras as $[\cat F, \cat{Set}]$-functors}\label{sec:monadalgfset}
We now describe algebras for finitary monads and the maps
between them in terms of $[\cat F, \cat{Set}]$-enriched functors and
transformations.  For this, we use an analysis which appears in
can be traced back to~\cite[Section~3]{Kelly1974Coherence}; it is
based on certain general considerations
concerning monoidal actions, which may be found, for example, in~\cite{Janelidze2001A-note}.

Suppose that $\V$ is a monoidal category. By a \emph{monoidal action}
of $\V$ on a category $\W$, we mean a functor $\diamond \colon \V
\times \W \to \W$ together with a strong monoidal structure on its
transpose $\V \to [\W, \W]$ (viewing $[\W, \W]$ as strict monoidal
under composition); to give this strong monoidal structure is equally
to give natural isomorphisms $I \diamond X \cong X$ and $(A \otimes
A') \diamond X \cong A \diamond (A' \diamond X)$ satisfying the
evident coherence laws. The action is said to be \emph{right closed}
if each $(\thg) \diamond X \colon \V \to \W$ admits a right adjoint
$\spn{X, \thg} \colon \W \to \V$, with counit components $\varepsilon \colon
\spn{X, Y} \diamond X \to Y$, say.
% ; thus we have isomorphisms of hom-sets
% \begin{equation}\label{eq:transpose}
% \V(A, \spn{X,Y}) \cong \W(A \diamond X, Y)
% \end{equation}
% natural in all variables.
In these circumstances, the category $\W$ acquires a $\V$-enrichment,
with hom-objects the $\spn{X,Y}$'s and identities and composition $I
\to \spn{X,X}$ and $ \spn{Y,Z} \otimes \spn{X,Y} \to \spn{X,Z}$
obtained by transposing the respective morphisms $I \diamond X \cong
X$ and
\begin{equation*}
  (\spn{Y,Z} \otimes \spn{X,Y}) \diamond X \xrightarrow{\cong}
  \spn{Y,Z} \diamond (\spn{X,Y} \diamond X) \xrightarrow{1 \diamond
    \varepsilon} \spn{Y,Z} \diamond Y
  \xrightarrow{\varepsilon} Z
\end{equation*}
of $\W$ under the right closure adjunctions. Observe that, as
in~\cite[Lemma~2.1]{Janelidze2001A-note}, this structure makes
$\W$ into a \emph{tensored} $\V$-category in the sense of
Section~\ref{sec:tensors} below: the tensor of $X \in \W$ by $A \in
\V$ is given by $A \diamond X$.

Suppose now that $A$ is a monoid in $\V$; its image under the strong
monoidal functor $\V \to [\W, \W]$ is then a monoid in $[\W, \W]$,
hence a monad $A \diamond (\thg)$ on $\W$.
\begin{Prop}\label{prop:actions}
  Given a right closed monoidal action $\diamond \colon \V \times \W
  \to \W$ and a monoid $A \in \V$, we have, on viewing $A$ as
  one-object $\V$-category $\Sigma A$ and equipping $\W$ with the $\V$-enrichment
  derived from the action, an isomorphism of categories $\V\text-\cat{CAT}(\Sigma A, \W)
  \cong (A \diamond \thg)\text-\cat{Alg}$.
\end{Prop}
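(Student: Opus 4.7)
The plan is to unpack what a $\V$-functor $F \colon \Sigma A \to \W$ amounts to and transport it across the right closure adjunctions. Such an $F$ is the data of an object $X = F(\ast) \in \W$ together with a morphism $\alpha \colon A \to \spn{X,X}$ in $\V$ (the hom-action), subject to the $\V$-functor unit and composition axioms. Under the adjunction $(\thg) \diamond X \dashv \spn{X, \thg}$, the morphism $\alpha$ transposes to a morphism $a \colon A \diamond X \to X$ in $\W$. I would make this bijection on data the carrier of the claimed isomorphism, sending $(X, \alpha)$ to the $(A \diamond \thg)$-algebra $(X, a)$, and conversely.

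Next, I would verify that the two $\V$-functoriality axioms correspond exactly to the two algebra axioms. For the unit: the $\V$-functor axiom asks that $\alpha \colon A \to \spn{X,X}$ precomposed with the unit $\eta_A \colon I \to A$ of the monoid agrees with the unit $I \to \spn{X,X}$ of the $\V$-enrichment. The latter was defined by transposing the unit coherence isomorphism $I \diamond X \cong X$, so applying the adjunction transpose sends this equation to the commutativity of $\eta_A \diamond X$ composed with $a$ equals the unit isomorphism $I \diamond X \cong X$ — which is precisely the unit law for the algebra $(X,a)$. For composition: the $\V$-functor axiom is an equation between two maps $A \otimes A \to \spn{X,X}$, namely $\alpha$ on the monoid multiplication, and the composition morphism of the enrichment precomposed with $\alpha \otimes \alpha$. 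Transposing along $(\thg) \diamond X \dashv \spn{X, \thg}$ and using that the composition $\spn{X,X} \otimes \spn{X,X} \to \spn{X,X}$ was itself defined by transposing the evaluation $(\spn{X,X} \otimes \spn{X,X}) \diamond X \to X$ obtained via two applications of $\varepsilon$ and the associator, this equation becomes exactly the associativity square $a \cdot (\mu_A \diamond X) = a \cdot (A \diamond a) \cdot \mathrm{assoc}$ — the algebra associativity axiom.

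For morphisms, a $\V$-natural transformation $\phi \colon F \To G$ with $F(\ast) = X$, $G(\ast) = Y$ is by definition a morphism $\phi \colon I \to \spn{X,Y}$ in $\V$ — equivalently, by transposition, a morphism $f \colon X \to Y$ in $\W$ — subject to the single $\V$-naturality hexagon at the unique hom $A$. Transposing this hexagon across the right closure adjunctions, and using the defining formula for composition in the $\V$-enrichment in terms of $\varepsilon$, yields precisely the square $f \cdot a = b \cdot (A \diamond f)$ expressing that $f$ is a map of $(A \diamond \thg)$-algebras. Composition and identities are preserved on the nose by the transpose, so the assignment is an isomorphism of categories.

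The main obstacle is the bookkeeping in step two: chasing the composition axiom through the adjunction transpositions, keeping track of the associator of the action and the triangle identities, to see that it matches the algebra associativity law exactly rather than up to some extra coherence. Once one writes out the definition of the composition morphism $\spn{Y,Z} \otimes \spn{X,Y} \to \spn{X,Z}$ as spelled out in the excerpt and uses naturality of $\varepsilon$, the correspondence drops out, but this is the one diagrammatic step that genuinely needs checking.
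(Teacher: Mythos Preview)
Your proposal is correct and follows essentially the same approach as the paper: transpose the data and axioms of a $\V$-functor $\Sigma A \to \W$ and of a $\V$-natural transformation across the right closure adjunction $(\thg)\diamond X \dashv \spn{X,\thg}$, identifying them with the data and axioms of an $(A\diamond\thg)$-algebra and algebra map respectively. The paper's proof is slightly more terse than yours but the argument is the same, including the use of the coherence isomorphism $I\diamond X\cong X$ to pass from $\phi\colon I\to\spn{X,Y}$ to a morphism $X\to Y$ in $\W$.
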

\begin{proof}
  To give a $\V$-functor $\Sigma A \to \W$ is to give an object $X \in \W$
  and a map $x \colon A \to \spn{X,X}$ in $\V$ making the diagrams
  \[
  \cd[@!C@C-1em]{
    & I \ar[dl]_j \ar[dr]^j \\
    A \ar[rr]_x & & \spn{X,X}} \qquad \text{and} \qquad \cd[@C+0.5em]{
    A \otimes A \ar[r]^-{x \otimes x} \ar[d]_{m} &
    \spn{X,X} \otimes \spn{X,X} \ar[d]^{m}  \\
    A \ar[r]_x & \spn{X,X} }\] commute. Transposing under adjunction,
  this is equally to give an object $X \in \W$ and a map $A \diamond X
  \to X$ satisfying the two axioms for an $A \diamond
  (\thg)$-algebra. To give a $\V$-natural transformation $F
  \Rightarrow G \colon \Sigma A \to \W$ is to
  give a map $\phi \colon I \to \spn{X,Y}$ such that the diagram
  \[
  \cd[@C+1em]{
    A \ar[r]^{y} \ar[d]_{x} & \spn{Y, Y} \ar[r]^-{\spn{Y,Y}\otimes\phi} & \spn{Y,Y} \otimes \spn{X,Y} \ar[d]^{m} \\
    \spn{X, X} \ar[r]_-{\phi\otimes \spn{X,X}} &
    \spn{X,Y}\otimes\spn{X,X} \ar[r]_-{m} & \spn{X, Y} }
  \]
  commutes; which, transposing under adjunction and using the
  coherence constraint $I \diamond X \cong X$, is equally to give a
  map $X \to Y$ commuting with the $A \diamond (\thg)$-actions.  This
  gives the isomorphism $\V\text-\cat{CAT}(\Sigma A, \W) \cong (A \diamond
  \thg)\text-\cat{Alg}$; naturality in $A$ is easily verified.
\end{proof}
% \begin{Rk}
%   The preceding result characterises maps from one-object
%   $\V$-categories into $\underline \W$. Though we will not need it
%   in this paper, we may give a similar characterisation of maps $\A
%   \to \underline \W$ from arbitrary $\V$-categories, which in fact
%   determines $\underline \W$ up to isomorphism. Let us define a
%   \emph{$\diamond$-representation} of a $\V$-category $\A$ to be an
%   $\ob \A$-indexed family of objects $Ba \in \W$ together with an
%   $(\ob \A)^2$-indexed family of maps $\A(a,a') \diamond Ba \to Ba'$
%   satisfying the usual associativity and unit axioms. There is a
%   $2$-functor $\diamond\text-\cat{Rep} \colon \V\text-\cat{CAT} \to
%   \cat{CAT}$ sending $\A$ to the category of
%   $\diamond$-representations of $\A$ in $\W$, and the above proof
%   generalises without difficulty to show that $\underline \W$ is a
%   representing object for this $2$-functor.
% \end{Rk}

We now apply the preceding generalities to the case $\V = [\cat F,
\cat{Set}]$. Transporting the evident monoidal action of
$\cat{End}_f(\cat{Set})$ on $\cat{Set}$ across the equivalence
$\cat{End}_f(\cat{Set}) \simeq [\cat F, \cat{Set}]$ yields a monoidal
action $\diamond \colon [\cat F, \cat{Set}] \times \cat{Set} \to
\cat{Set}$, given as on the left in:
\[A \diamond X = \textstyle\int^{n \in \cat F} An \times X^n \qquad
\quad \spn{X,Y}(n) = \cat{Set}(X^n, Y)\rlap{ .}
\] This action is right closed, with the right adjoints $\spn{X,
  \thg}$ being defined as on the right above. We thus obtain a
canonical enrichment of the category of sets to an $[\cat F,
\cat{Set}]$-category $\Ss$ with hom-objects given by $\Ss(X,Y) =
\cat{Set}(X^{(\thg)}, Y)$; and by the preceding result, we conclude that:
\begin{Prop}\label{prop:monadfset}
  The embedding of finitary monads into one-object $[\cat F,
  \cat{Set}]$-categories fits into a pseudocommuting triangle of
  functors
  \begin{equation*}
    \cd{
      \cat{Mnd}_f(\cat{Set}) \ar[dr]_{(\thg)\text-\cat{Alg}} \ar[rr] &
      \ar@{}[d]|{\textstyle\simeq} & [\cat F, \cat{Set}]\text-\cat{CAT}\rlap{ .} \ar[dl]^{\qquad [\cat F, \cat{Set}]\text-\cat{CAT}(\thg,\,\, \Ss)}\\ & \cat{CAT}
    }
  \end{equation*} 
\end{Prop}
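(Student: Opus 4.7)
The plan is to deduce Proposition \ref{prop:monadfset} by direct application of Proposition \ref{prop:actions} in the case $\V = [\cat F, \cat{Set}]$, $\W = \cat{Set}$, equipped with the right closed monoidal action $\diamond$ introduced in the preceding discussion. Given a finitary monad $\mathsf T$ with underlying functor in $[\cat F, \cat{Set}]$ corresponding to a monoid $T$ via the equivalence $[\cat F, \cat{Set}] \simeq \cat{End}_f(\cat{Set})$, that proposition supplies, objectwise in $T$, an isomorphism
\[
[\cat F, \cat{Set}]\text-\cat{CAT}(\Sigma T, \Ss) \cong (T \diamond \thg)\text-\cat{Alg}\rlap{ .}
\]
All that remains is to match the right-hand side with $\mathsf T\text-\cat{Alg}$ in a manner natural in $T$.

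For the fibrewise identification, I would note that the monoidal equivalence $[\cat F, \cat{Set}] \simeq \cat{End}_f(\cat{Set})$ is implemented by restriction and left Kan extension along the inclusion $I \colon \cat F \to \cat{Set}$, and the latter sends $T \in [\cat F, \cat{Set}]$ to the finitary endofunctor $X \mapsto \int^{n \in \cat F} Tn \times X^n$, which is precisely $T \diamond X$. Thus the endofunctor $T \diamond (\thg)$ is, up to canonical isomorphism, the underlying endofunctor of $\mathsf T$. Since the equivalence is strong monoidal, it carries the monoid structure on $T$ in $[\cat F, \cat{Set}]$ to the monad structure on $\mathsf T$ in $\cat{End}_f(\cat{Set})$, so that $(T \diamond \thg)\text-\cat{Alg}$ is, as a category, just $\mathsf T\text-\cat{Alg}$.

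For the pseudocommutativity of the triangle, I would observe that the isomorphism of Proposition \ref{prop:actions}, as constructed in its proof, is manifestly natural in the monoid argument: a morphism $T \to T'$ of monoids induces both a $\V$-functor $\Sigma T \to \Sigma T'$ and a morphism of monads $(T \diamond \thg) \to (T' \diamond \thg)$, and these act compatibly under precomposition on the two sides of the isomorphism. Combined with the identification of the previous paragraph, this furnishes the required pseudonatural equivalence fitting into the stated triangle.

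The only genuine work, rather than a routine invocation of the previous proposition, lies in verifying that the monoidal equivalence $[\cat F, \cat{Set}] \simeq \cat{End}_f(\cat{Set})$ really does transport the monoid structure on $T$ faithfully onto the monad structure of $\mathsf T$ — unit and multiplication included — so that the $(T \diamond \thg)$-algebras produced by Proposition \ref{prop:actions} are genuine $\mathsf T$-algebras in the usual sense. This is a standard unwinding using the density formula for $\Lan_I T$ together with strong monoidality, and presents no conceptual obstacle.
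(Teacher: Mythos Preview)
Your proposal is correct and matches the paper's approach exactly: the paper deduces this proposition immediately from Proposition~\ref{prop:actions} applied to the right closed action $\diamond \colon [\cat F,\cat{Set}] \times \cat{Set} \to \cat{Set}$, together with the identification of $T \diamond (\thg)$ with the underlying endofunctor of $\mathsf T$ via the monoidal equivalence $[\cat F,\cat{Set}] \simeq \cat{End}_f(\cat{Set})$. Your additional remarks about transporting the monoid structure and checking naturality simply make explicit what the paper leaves implicit in its one-line ``by the preceding result, we conclude that''.
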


\section{Representable $[\cat F, \cat{Set}]$-categories}
We have now described finitary monads and their algebras in terms of
$[\cat F, \cat{Set}]$-category theory, and in the next section, we
will do the same for Lawvere theories and their models. In the current
section, we set up the theory necessary to do this; as
anticipated in the introduction, this will involve showing that
ordinary categories admitting finite powers (i.e., $n$-fold products
$A \times \dots \times A$ of each object with itself) may identified
with those $[\cat F, \cat{Set}]$-categories admitting a certain class
of absolute colimits.

\subsection{$[\cat F, \cat{Set}]$-categories, functors and transformations}
We begin by unfolding the basic notions of $[\cat F,
\cat{Set}]$-category theory. An $[\cat F, \cat{Set}]$-category $\M$ is
given by the following data:
\begin{enumerate}[(i)]
\item A set of objects $\ob \M$;
\item For all $X, Y \in \ob \M$ and $n \in \cat F$, a homset $\M_n(X,
  Y)$;
\item For all $X, Y \in \ob \M$ and $\phi \colon n \to m \in \cat F$,
  functorial reindexing operations
  \[\phi_\ast \colon \M_n(X, Y) \to \M_m(X, Y)\rlap{ ;}\]
\item For all $X \in \ob \M$, an identity map $1_X \in \M_1(X,
  X)$; and
\item For all $X,Y,Z\in \ob \M$, composition operations, natural in $n$ and
  $m$:
  \begin{equation*}\label{eq:comp}
    \begin{aligned}
      \M_m(Y, Z) \times \M_n(X, Y)^m & \to \M_n(X,Z)\\
      (g, f_1, \dots, f_m) & \mapsto g \circ (f_1, \dots, f_m)\rlap{
        ,}
    \end{aligned}
  \end{equation*}
\end{enumerate}
obeying the following axioms, wherein we write $\pi_1, \dots, \pi_n
\in \M_n(X,X)$ for the images of the element $1_A \in \M_1(X,X)$ under
the $n$ distinct maps $1 \to n$ in $\cat F$:
\begin{enumerate}[(i)]\addtocounter{enumi}{5}
\item $\pi_i \circ (f_1, \dots, f_n) = f_i$;
\item $g \circ (\pi_1, \pi_2, \dots, \pi_n) = g$;
\item $h \circ (g_1 \circ (f_{1}, \dots, f_{k}), \dots, g_j \circ
  (f_{1}, \dots, f_{k})) = (h \circ (g_1, \dots, g_j)) \circ (f_{1},
  \dots, f_{k})$.
\end{enumerate}

An $[\cat F, \cat{Set}]$-functor $F \colon \M \to \N$ is given by an
assignation on objects and assignations on homsets $\M_n(X,Y) \to
\N_n(FX,FY)$ which are natural in $n$ and preserve composition and
identities, whilst an $[\cat F, \cat{Set}]$-transformation $\alpha
\colon F \Rightarrow G$ is given by elements $\alpha_X \in
\N_1(FX,GX)$ such that
\begin{equation}
  \alpha_Y \circ Ff = Gf \circ (\alpha_X \circ \pi_1, \dots, \alpha_X
  \circ \pi_n)\label{eq:fnat}
\end{equation}
for all $f \in \M_n(X,Y)$.  Every $[\cat F, \cat{Set}]$-category $\M$
has an underlying ordinary category $V\M$ with objects those of $\M$
and homsets $V\M(X,Y) = \M_1(X,Y)$; with the evident extension to $1$-
and $2$-cells, we obtain a forgetful $2$-functor $V \colon [\cat F,
\cat{Set}]\text-\cat{CAT} \to \cat{CAT}$.

\begin{Rk}\label{rk:clones}
  Axioms (v) and (vii) in the definition of $[\cat F,
  \cat{Set}]$-category force the maps in (iii) to be given by
  $\phi_\ast(g) = g \circ (\pi_{\phi(1)}, \dots, \pi_{\phi(n)})$; and
  in fact, this leads to an alternative axiomatisation of $[\cat F,
  \cat{Set}]$-categories.  Suppose we are given:
  \begin{enumerate}[(i')]
  \item[(i)] A set of objects $\ob \M$;
  \item[(ii)] For all $X, Y \in \ob \M$ and $n \in \cat F$, a homset
    $\M_n(X, Y)$;
  \item[(iv')] For all $X \in \ob \M$ and $n \in \cat F$, projection
    maps $\pi_1, \dots, \pi_n \in \M_n(X, X)$;
  \item[(v')] For all $X,Y,Z\in \ob \M$, composition operations:
    \begin{equation*}
      \begin{aligned}
        \M_m(Y, Z) \times \M_n(X, Y)^m & \to \M_n(X,Z)\\
        (g, f_1, \dots, f_m) & \mapsto g \circ (f_1, \dots, f_m)\rlap{
          ,}
      \end{aligned}
    \end{equation*}
  \end{enumerate}
  such that axioms (vi)--(viii) are verified. Then we define
  identities as in (iv) by $1_A = \pi_1 \in \M_1(A,A)$, and functorial
  reindexing maps as in (iii) by the above formula; on doing so, (v') becomes
  natural in $n$ and $m$, so yielding~(v).  This alternative
  axiomatisation is a many-object version of the universal
  algebraists' notion of \emph{abstract clone}~\cite{Hall1958Some}. In
  terms of this axiomatisation, an $[\cat F, \cat{Set}]$-functor $\M
  \to \N$ is given by assignations on objects and on homs which
  preserve composition and the projection maps.
\end{Rk}
% \begin{Ex}\label{ex:fpcat}
%   Let $\C$ be a category with finite powers: that is, $n$-fold
%   products $X^n = X \times \cdots \times X$ of each object $X \in
%   \C$ with itself.  We define an $[\cat F, \cat{Set}]$-category $R
%   \C$ using the axiomatisation of the preceding remark. The objects
%   of $R \C$ are those of $\C$ and the hom-sets are given by
%   $R\C_n(X,Y) = \C(X^n, Y)$. Composition $R\C_m(Y,Z) \times
%   R\C_n(X,Y)^m \to \R\C_n(X,Z)$ is given by the maps
%   \begin{align*}
%     \C(Y^m, Z) \times \C(X^n, Y)^m &\to \C(X^n, Z)\\
%     (g, f_1, \dots, f_m) & \mapsto g \circ \spn{f_1, \dots, f_m}
%   \end{align*}
%   and the maps $\pi_1, \dots, \pi_n \in R\C_n(X,X) = \C(X^n, X)$ are
%   the product projections. If $\D$ is another category with chosen
%   finite powers and $F \colon \C \to \D$ a finite-power-preserving
%   functor, then there is an $[\cat F, \cat{Set}]$-functor $RF \colon
%   R\C \to R\D$ agreeing with $F$ on objects, and with action
%   $R\C_n(X,Y) \to R \D_n(FX,FY)$ on homs:
%   \begin{equation*}
% \C(X^n, Y) \xrightarrow{F} \D(F(X^n), FY) \cong \D((FX)^n, FY)\rlap{
%   .}
% \end{equation*}
% With the evident extension to $2$-cells, we thus obtain a
% $2$-functor $R$ as in
% \begin{equation}\label{eq:commtriangle}
% \cd[@!C@C-3em]{
% \cat{CAT}_{\mathrm{fp}} \ar[rr]^R \ar[dr]_{J} &  & [\cat F, \cat{Set}]\text-\cat{CAT}\rlap{ ,} \ar[dl]^V \\ & \cat{CAT}}
% \end{equation}
% where $\cat{CAT}_\mathrm{fp}$ is the $2$-category of categories with
% finite powers and finite-power-preserving functors, and $J$ is the
% inclusion $2$-functor.
% \end{Ex}

\begin{Rk}\label{rk:1}
  An $[\cat F, \cat{Set}]$-category also admits a \emph{linear
    composition} operation
  \begin{equation}\label{eq:linear}\begin{aligned}
      \M_m(Y,Z) \times \textstyle \prod_{i = 1}^m \M_{n_i}(X, Y) & \to \M_{\Sigma_i n_i}(X,Z) \\
      (g, f_1, \dots, f_n) & \mapsto g \otimes (f_1, \dots, f_n)
    \end{aligned}
  \end{equation}
  given by $g \otimes (f_1, \dots, f_n) = g \circ
  ((\iota_1)_\ast(f_1), \dots, (\iota_n)_\ast(f_n))$ with $(\iota_j
  \colon n_j \to \Sigma_i n_i)_{j = 1}^m$ the coproduct
  injections. This composition, together with the identity elements in
  (iv) and reindexing maps in (iii), makes $\M$ into a cartesian
  multicategory (called a~\emph{Gentzen multicategory}
  in~\cite{Lambek1989Multicategories}), where we take the set of
  multimaps $X_1, \dots, X_n \to Y$ to be empty unless $X_1 = \dots =
  X_n = X$, in which case we take it to be $\M_n(X,Y)$. This in fact
  gives a further alternative axiomatisation\footnote{The correspondence
    between this axiomatisation and the original one corresponds to
    the correspondence between the ``multiplicative'' and ``additive''
    treatment of contexts in the classical sequent calculus. The basic
    calculation underlying these correspondences is that, for
    cartesian monoidal $\C$, the convolution monoidal structure on
    $[\C^\op, \cat{Set}]$ is again cartesian monoidal.} of $[\cat F,
  \cat{Set}]$-categories: they are precisely the cartesian
  multicategories in which every multimap $X_1, \dots, X_n \to Y$ has
  $X_1 = \dots = X_n$; the key point is that composition (v) is
  definable in terms of~\eqref{eq:linear} and (iii) as:
  \[
  g \circ (f_1, \dots, f_m) = (\pi_1)_\ast(g \otimes (f_1, \dots,
  f_n)) \quad \text{(with $\pi_1 \colon n \times m \to n$ the
    projection).}
  \]
\end{Rk}

\subsection{Tensors in $[\cat F, \cat{Set}]$-categories}\label{sec:tensors}
The relevant colimits for enriched category theory are the
\emph{weighted} (there called \emph{indexed}) limits
of~\cite[Chapter~3]{Kelly1982Basic}. For the moment, we shall need
only the following case of the general notion. Given $\V$ a
right-closed\footnote{Actually,~\cite{Kelly1982Basic} assumes a
  \emph{symmetric} monoidal closed base $\V$, but the definition
  extends without fuss to the non-symmetric, right-closed case.}
monoidal category and $\C$ a $\V$-category, a \emph{tensor} of $X \in
\C$ by $A \in \V$ is an object $Z$ of $\C$ and map $i \colon A \to
\C(X,Z)$ in $\V$ such that for all $Y \in \C$, the composite
\begin{equation}
  \C(Z, Y) \xrightarrow{\C(X,\thg)} [\C(X,Z), \C(X,Y)] \xrightarrow{[i, 1]} [A, \C(X, Y)]\label{eq:rep1}
\end{equation}
is invertible in $\V$; we may sometimes write $Z$ as $A \otimes X$, or
say that \emph{$i$ exhibits $Z$ as $A \otimes X$}.  Taking now $\V =
[\cat F, \cat{Set}]$ and $A = y_n = \cat F(n,\thg)$, we see that, for
an $[\cat F, \cat{Set}]$-category $\M$ and an object $X \in \M$, a
tensor of $X$ by $y_n$ is given by an object $Z$ and map $i \colon y_n
\to \C(X,Z)$---which by the Yoneda lemma is equally an element $i \in
\C_n(X,Z)$---such that for all $Y \in \C$, the map~\eqref{eq:rep1} is
invertible. Unfolding the definitions, this says that for all $Y \in
\C$ and $k \in \cat F$, the function
\begin{equation}\label{eq:cotens-prop}
  \begin{aligned}
    \C_k(Z,Y) &\to \C_{k n}(X, Y)\\
    g & \mapsto g \otimes (i, \dots, i)
  \end{aligned}
\end{equation}
is invertible; here we use the linear composition operation
of~\eqref{eq:linear}.

\begin{Prop}\label{prop:power-copower-fset}
  Let $\M$ be an $[\cat F, \cat{Set}]$-category. For all $X, Z \in
  \M$, the following data are equivalent:
  \begin{enumerate}[(a)]
  \item An element $i \in \M_n(X,Z)$ exhibiting $Z$ as $y_n \otimes
    X$;
  \item Elements $p_1, \dots, p_n \in \M_1(Z,X)$ exhibiting $Z$ as the
    (enriched) power $X^n$;
  \item Elements $i$ and $p_1, \dots, p_n$ as above such that
    \begin{equation}\label{eq:copower-char}
      i \circ (p_1, \dots, p_n) = 1_Z 
      \qquad \text{and} \qquad
      p_k \circ i = \pi_k \text{ for all $1 \leqslant k \leqslant n$.}
    \end{equation}
  \end{enumerate}
  It follows that, in an $[\cat F, \cat{Set}]$-category, tensors by
  representables $y_n$ are \emph{absolute colimits} in the sense of
  being preserved by any $[\cat F, \cat{Set}]$-functor.
\end{Prop}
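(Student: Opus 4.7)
The plan is to prove (a) $\Leftrightarrow$ (c) $\Leftrightarrow$ (b), treating (c) as the pivotal characterisation so that the final absoluteness statement can be read off directly. I would begin by unfolding what (a) and (b) say pointwise. Using the isomorphism $[y_n, B](k) \cong B(kn)$ (itself a short Yoneda calculation from $y_k \otimes y_n = y_{kn}$), assertion (a) becomes the claim that the function $g \mapsto g \otimes (i, \dots, i)$ (with $k$ copies of $i$) is a bijection $\M_k(Z, Y) \to \M_{kn}(X, Y)$ for every $Y$ and $k$; and using $[n \cdot I, B](k) \cong B(k)^n$, assertion (b) becomes the claim that $f \mapsto (p_1 \circ f, \dots, p_n \circ f)$ is a bijection $\M_k(Y, Z) \to \M_k(Y, X)^n$ for every $Y$ and $k$.

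For (c) $\Rightarrow$ (b), I would exhibit the inverse as $(f_1, \dots, f_n) \mapsto i \circ (f_1, \dots, f_n)$; invertibility is then a quick telescoping argument using associativity (viii), projection-absorption (vi), and the two equations of (c). For (c) $\Rightarrow$ (a), the inverse must carry $h \in \M_{kn}(X, Y)$ into $\M_k(Z, Y)$; the natural candidate is $h \circ (q_1, \dots, q_{kn})$, where $q_{(b-1)n + a} \defeq p_a \circ (\pi_b)$ with $\pi_b \in \M_k(Z, Z)$, and the invertibility calculation reduces, via (viii), to the two identities $((\iota_b)_\ast i) \circ (q_1, \dots, q_{kn}) = \pi_b$ and $q_{(b-1)n+a} \circ ((\iota_1)_\ast i, \dots, (\iota_k)_\ast i) = \pi_{(b-1)n + a}$, each a short application of (vi), (vii), and the two equations of (c). Conversely, for (a) $\Rightarrow$ (c), apply the bijection of (a) at $(k, Y) = (1, X)$ to each $\pi_j \in \M_n(X, X)$ to produce a unique $p_j \in \M_1(Z, X)$ with $p_j \circ i = \pi_j$; then verify $i \circ (p_1, \dots, p_n) = 1_Z$ by invoking injectivity of the bijection at $(k, Y) = (1, Z)$ and checking that both sides map to $i$ under $g \mapsto g \circ i$, using (vi), (vii), and (viii). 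The direction (b) $\Rightarrow$ (c) is dual: apply the bijection of (b) at $(k, Y) = (n, X)$ to $(\pi_1, \dots, \pi_n)$ to extract $i$, then use injectivity at $(1, Z)$ to deduce $i \circ (p_1, \dots, p_n) = 1_Z$.

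The absoluteness clause then falls out immediately: any $[\cat F, \cat{Set}]$-functor $F \colon \M \to \N$ preserves composition and identities, and hence the derived elements $\pi_j$ (which are reindexings of identities), so it carries data $(i, p_1, \dots, p_n)$ satisfying the equations of (c) in $\M$ to data $(Fi, Fp_1, \dots, Fp_n)$ satisfying them in $\N$; whence $FZ$ is once again a tensor $y_n \otimes FX$ by (c) $\Rightarrow$ (a). The principal obstacle I expect is the bookkeeping in the inverse for (c) $\Rightarrow$ (a): pinning down the correct reindexing of the $p_j$'s along the projections $\pi_b \in \M_k(Z, Z)$ so that they live in $\M_k(Z, X)$ and assemble into a $kn$-tuple compatible with $h$. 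Once the formula is in place, however, the verification is a mechanical application of the clone axioms (vi)--(viii).
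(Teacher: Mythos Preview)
Your proposal is correct and follows essentially the same route as the paper: both arguments pivot on (c), extracting the $p_j$'s (resp.\ $i$) from the universal property in (a) (resp.\ (b)) applied to the $\pi_j$'s, and exhibiting the same explicit inverses for (c)$\Rightarrow$(a) and (c)$\Rightarrow$(b). Your write-up is somewhat more explicit about the indexing and the clone-axiom bookkeeping in the (c)$\Rightarrow$(a) inverse (the paper simply writes down the formula $h \circ (p_1 \circ \pi_1, \dots, p_n \circ \pi_k)$ and leaves the verification to the reader), but the content is the same; note only that your ordering convention $q_{(b-1)n+a}=p_a\circ\pi_b$ differs from the paper's $(a-1)k+b$ ordering, which is harmless provided you fix a matching convention for the coproduct injections $\iota_b$.
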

The universal property asserted in (b) of the maps $p_1, \dots, p_n$
is that, for all $Y \in \C$ and $k \in \cat F$, the map of homsets
$\C_k(Y,Z) \to \C_k(Y, X)^n$ given by postcomposition with $p_1,
\dots, p_n$ is invertible. In particular, this implies that $Z$ is the
power $X^n$ in the underlying ordinary category $V\M$.
\begin{proof}
  Given $i$ as in (a), we define $p_1, \dots, p_n$ as in (c) by the
  universal property of the tensor applied to the maps $\pi_1, \dots,
  \pi_n \in \M_n(X,X)$; then the right-hand equalities
  in~\eqref{eq:copower-char} are immediate, and the left-hand one
  follows on precomposing with $i$ and applying the universal
  property. Conversely, given (c), we obtain the inverse
  to~\eqref{eq:cotens-prop} required for (a) by sending $h \in
  \C_{kn}(X,Y)$ to the composite $h \circ (p_1 \circ \pi_1, \dots, p_1
  \circ \pi_k, \dots, p_n \circ \pi_1, \dots, p_n \circ \pi_k) \in
  \C_k(Z,Y)$.

  On the other hand, given $p_1, \dots, p_n$ as in (b), we define $i$
  as in (c) by the universal property of the power applied to the
  family $(\pi_1, \dots, \pi_n) \in \C_n(X,X)^n$; then the right-hand
  equalities in~\eqref{eq:copower-char} are immediate, and the
  left-hand one follows on postcomposing with $p_1, \dots, p_n$ and
  applying the universal property. Conversely, given (c), we obtain an
  inverse $\C_k(Y,X)^n \to \C_k(Y,Z)$ for postcomposition with $p_1,
  \dots, p_n$, as required for (b), by the mapping $(g_1, \dots, g_n)
  \mapsto i \circ (g_1, \dots, g_n)$.

  Finally, since tensors by representables admit the equational
  reformulation in~(c), they are clearly preserved by any $[\cat F,
  \cat{Set}]$-functor.
\end{proof}
\begin{Rk}\label{rk:absolute}
  The above direct proof could also be deduced from the general
  considerations of~\cite{Street1983Absolute} on absolute
  colimits. Applied to the case of $\V$-enriched tensors, the main
  theorem of ibid.\ states that tensors by $A \in \V$ are absolute
  just when $A$ admits a \emph{left dual} $A^o$ in $\V$, meaning that
  we have maps $\eta \colon I \to A \otimes A^o$ and $\varepsilon
  \colon A^o \otimes A \to I$ satisfying the usual triangle identities
  for an adjunction; moreover, tensors by $A$ may then be identified
  with \emph{cotensors} (the dual limit notion) by $A^o$. Specialising
  to the situation at hand, the object $A = y_n$ of $[\cat F,
  \cat{Set}]$ has left dual $A^o = h_n = n \times (\thg)$, since $h_n$
  and $y_n$ correspond to the adjoint finitary endofunctors $(\thg)
  \times n \dashv (\thg)^n$ of $\cat{Set}$; and so we conclude that
  tensors by $y_n$ are absolute and correspond to cotensors by $h_n$.
  Since $h_n$ is isomorphic to the $n$-fold coproduct $I + \dots + I$
  of the unit object, a cotensor of $X$ by $h_n$ is equally well an
  $[\cat F, \cat{Set}]$-enriched power $X^n$, which gives the
  equivalence (a) $\Leftrightarrow$ (b) of
  Proposition~\ref{prop:power-copower-fset}; a more refined analysis
  of the general case also yields the formulation in (c).
\end{Rk}

\subsection{Representable $[\cat F, \cat{Set}]$-categories}
We define an $[\cat F, \cat{Set}]$-category $\M$ to be
\emph{representable} if it admits all tensors by $y_n$'s. The
terminology is motivated by the observation in Remark~\ref{rk:1} that
a $[\cat F, \cat{Set}]$-category can be seen as a particular kind of
multicategory; when seen in this way, our notion of representability
reduces to the standard notion of representability for
multicategories---as given, for example,
in~\cite[Definition~3.3.1]{Leinster2004Higher}.

Let us write $[\cat F, \cat{Set}]\text-\cat{CAT}_\mathrm{rep}$ for the
full sub-$2$-category on the representable $[\cat F,
\cat{Set}]$-categories.  By (a) $\Rightarrow$ (b) in
Proposition~\ref{prop:power-copower-fset}, the underlying category
$V\M$ of a representable $[\cat F, \cat{Set}]$-category admits all
finite powers; and by the absoluteness of tensors by representables,
the underlying functor $VF$ of any $[\cat F, \cat{Set}]$-functor $F
\colon \M \to \N$ between representable $[\cat F,
\cat{Set}]$-categories preserves finite powers. Thus the restriction
of the underlying category $2$-functor to representable $[\cat F,
\cat{Set}]$-categories factors through $\cat{CAT}_\mathrm{fp}$, the
$2$-category of categories with finite powers and
finite-power-preserving functors.

\begin{Prop}\label{prop:fsetfp}
  The $2$-functor $V \colon [\cat F,
  \cat{Set}]\text-\cat{CAT}_\mathrm{rep} \to \cat{CAT}_\mathrm{fp}$ is
  an equivalence of $2$-categories.
\end{Prop}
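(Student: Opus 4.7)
The plan is to show that $V$ is a biequivalence by establishing essential surjectivity on objects and local bijectivity on hom-categories. The key idea, made available by Proposition~\ref{prop:power-copower-fset}, is that in a representable $[\cat F, \cat{Set}]$-category $\M$ the enriched hom-set $\M_n(X, Y)$ is forced to be canonically isomorphic to the ordinary hom-set $V\M(X^n, Y)$; hence the enriched structure on a representable is completely determined by the underlying ordinary category together with its finite powers.

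For essential surjectivity, given any $\C \in \cat{CAT}_\mathrm{fp}$ with chosen finite powers $(X^n, p_1, \dots, p_n)$, I would construct a representable $[\cat F, \cat{Set}]$-category $\tilde \C$ with the same objects, hom-sets $\tilde \C_n(X, Y) \defeq \C(X^n, Y)$, reindexing $\phi_\ast \colon \tilde\C_n(X, Y) \to \tilde\C_m(X, Y)$ given by precomposition with the map $X^m \to X^n$ induced by $\phi \colon n \to m$, and composition sending $g \in \C(Y^m, Z)$ and $f_1, \dots, f_m \in \C(X^n, Y)$ to $g \circ \langle f_1, \dots, f_m \rangle \in \C(X^n, Z)$. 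Axioms (vi)--(viii) are routine consequences of the universal property of products. Taking $i \defeq 1_{X^n} \in \tilde\C_n(X, X^n)$, the product projections satisfy the equations of Proposition~\ref{prop:power-copower-fset}(c), so exhibit $X^n$ as a tensor $y_n \otimes X$ in $\tilde\C$; hence $\tilde\C$ is representable, and $V\tilde\C \cong \C$ by construction.

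For local bijectivity, the tensor correspondence of Proposition~\ref{prop:power-copower-fset} with $k=1$ yields a natural bijection $\M_n(X, Y) \cong V\M(X^n, Y)$, sending $f$ to $f \circ (p_1, \dots, p_n)$. Given a finite-power-preserving $G \colon V\M \to V\N$, the induced $\bar G \colon \M \to \N$ agrees with $G$ on objects, and on enriched hom-sets is the composite
\[
\M_n(X, Y) \cong V\M(X^n, Y) \xrightarrow{G} V\N(GX^n, GY) \cong V\N((GX)^n, GY) \cong \N_n(GX, GY)\rlap{ ,}
\]
the middle isomorphism arising from finite-power preservation of $G$. Clone-functoriality of $\bar G$ reduces, via the correspondence, to the functoriality of $G$ and the universal property of products. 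Uniqueness is forced, since tensors by $y_n$ are absolute and the enriched hom-sets of any lift must arise from the underlying ones as above. For 2-cells, an $[\cat F, \cat{Set}]$-natural transformation $\alpha \colon F \To G$ has components $\alpha_X \in \N_1(FX, GX) = V\N(FX, GX)$, and I would show that enriched naturality~\eqref{eq:fnat} reduces to ordinary naturality of $V\alpha$ against the ordinary morphism $X^n \to Y$ corresponding to $f \in \M_n(X, Y)$.

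The main obstacle is this last verification: unpacking~\eqref{eq:fnat} into an ordinary naturality statement. The tuple $(\alpha_X \circ \pi_1, \dots, \alpha_X \circ \pi_n)$ appearing on its right-hand side must, using the equations~\eqref{eq:copower-char}, be shown to correspond under the tensor bijection to the ordinary power-morphism $(\alpha_X)^n \colon (FX)^n \to (GX)^n$; with that in hand, enriched naturality against $f$ becomes exactly the ordinary naturality square for $V\alpha$ at the corresponding map $X^n \to Y$. The universal property of tensors then reduces this to a diagram chase relying only on the equations of Proposition~\ref{prop:power-copower-fset}(c) and the defining clone axioms of an $[\cat F, \cat{Set}]$-category.
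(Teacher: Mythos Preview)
Your proposal is correct and follows essentially the same approach as the paper. The paper constructs an explicit pseudoinverse $R$ and verifies $VR \cong 1$ and $RV \cong 1$, whereas you organise the same ingredients as ``$V$ is essentially surjective on objects and a local isomorphism on hom-categories''; your $\tilde\C$ is the paper's $R\C$, and your key isomorphism $\M_n(X,Y) \cong V\M(X^n,Y)$ is precisely what the paper uses to build the natural isomorphism $RV\M \cong \M$. The $2$-cell verification you flag as the main obstacle is indeed routine---the paper dispatches it with ``$[\cat F,\cat{Set}]$-naturality is easily verified''---and your sketch of how~\eqref{eq:fnat} unwinds via~\eqref{eq:copower-char} to ordinary naturality at $X^n \to Y$ is the right calculation.
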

Bearing in mind the identification of $[\cat F, \cat{Set}]$-categories
with particular cartesian multicategories, this result is essentially
a special case of the equivalence between cartesian multicategories
and categories with finite products
(cf.~\cite{Lambek1989Multicategories}); however, the proof is
short enough to bear repeating here.

\begin{proof}
  We exhibit a pseudoinverse $2$-functor $R \colon
  \cat{CAT}_\mathrm{fp} \to [\cat F,
  \cat{Set}]\text-\cat{CAT}_\mathrm{rep}$. For a category $\C$ with
  finite powers, we take $R\C$ to have the same collection of objects,
  hom-sets $R\C_n(X,Y) = \C(X^n, Y)$, composition given by:
  \begin{align*}
    \C(Y^m, Z) \times \C(X^n, Y)^m &\to \C(X^n, Z)\\
    (g, f_1, \dots, f_m) & \mapsto g \circ \spn{f_1, \dots, f_m}
  \end{align*}
  and projections $\pi_1, \dots, \pi_n \in R\C_n(X,X) = \C(X^n, X)$
  the product projections. Note that $R\C$ is representable, since for
  any $X \in R\C$ and $n \in \cat F$, the elements $1_{X^n} \in
  R\C_n(X, X^n)$ and $\pi_1, \dots, \pi_n \in R\C_1(X^n, X)$ satisfy
  \eqref{eq:copower-char} and so exhibit $X^n$ as a tensor of $X$ by
  $y_n$. Given next a functor $F \colon \C \to \D$ in
  $\cat{CAT}_\mathrm{fp}$, the $[\cat F, \cat{Set}]$-functor $RF$ has
  the same action on objects and action on homs:
  \begin{equation*}
    \C(X^n, Y) \xrightarrow{F} \D(F(X^n), FY) \xrightarrow{\cong} \D((FX)^n, FY)\rlap{
      .}
  \end{equation*}
  Finally, for any $\alpha \colon F \Rightarrow G$ in
  $\cat{CAT}_\mathrm{fp}$ we take $R\alpha$ to be the $[\cat F,
  \cat{Set}]$-transformation with the same components; $[\cat F,
  \cat{Set}]$-naturality is easily verified. It is clear that the
  $2$-functor $R$ so defined satisfies $VR \cong 1$, and it remains to
  show that $RV \cong 1$.

  Given $\M$ a representable $[\cat F, \cat{Set}]$-category, $RV\M$
  has the same objects and hom-sets $(RV\M)_n(X,Y) = \M_1(X^n,Y)$,
  where $X^n$ is a chosen tensor of $X$ by $y_n$. The projection maps
  in $RV\M_n(X,X) = \M_1(X^n, X)$ are the maps $p_1, \dots, p_n$
  exhibiting $X^n$ as the $n$-fold power of $X$; whilst composition is
  given by
  \begin{align*}
    \M_1(Y^m, Z) \times \M_1(X^n, Y)^m &\to \M_1(X^n, Z) \\
    (g, f_1, \dots, f_m) & \mapsto g \circ i \circ (f_1, \dots, f_n)
  \end{align*}
  where $i \in \M_m(Y, Y^m)$ exhibits $Y^m$ as the tensor of $Y$ by
  $y_m$.  We define an identity-on-objects $[\cat F,
  \cat{Set}]$-functor $RV\M \to \M$ with action on homs given by
  \begin{align*}
    \M_1(X^n, Y) & \to \M_n(X,Y) \\
    g & \mapsto g \circ i
  \end{align*}
  where $i \in \M_n(X, X^n)$ exhibits $X^n$ as the tensor of $X$ by
  $y_n$. The universal property implies that the actions on homs are
  invertible, and it is immediate from the definitions that
  composition and projections are preserved. We thus have an
  isomorphism $RV \M \to \M$; the naturality of these isomorphisms in
  $\M$ is now easily verified.
\end{proof}

\section{Lawvere theories and their models}\label{sec:lawv}
\subsection{Lawvere theories as $[\cat F, \cat{Set}]$-categories}
As in the introduction, a \emph{Lawvere theory} is a category $\cat T$
with finite products whose objects are the distinct finite powers
$X^n$ of a distinguished object $X$. A morphism of Lawvere theories is
a functor $\cat T \to \cat T'$ strictly preserving finite products and
the distinguished object. Note that the condition on the objects of a
Lawvere theory means that we can replace ``finite products''
everywhere in the above by ``finite powers''; and on doing so, the
definitions immediately translate via Proposition~\ref{prop:fsetfp}
into the language of $[\cat F, \cat{Set}]$-category theory.  We call a
representable $[\cat F, \cat{Set}]$-category $\M$ \emph{Lawvere} if
its objects are the tensors $y_n \otimes X$ by distinct representables
of a distinguished object $X$; a functor between two such categories
is called \emph{Lawvere} if it strictly preserves the distinguished
object and its chosen tensors. It is now immediate from
Proposition~\ref{prop:fsetfp} that:
\begin{Prop}\label{prop:law}
  The category  of Lawvere theories is equivalent to the
  category of Lawvere $[\cat F, \cat{Set}]$-categories
  and Lawvere functors.
\end{Prop}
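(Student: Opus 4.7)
The plan is to deduce Proposition~\ref{prop:law} as a corollary of Proposition~\ref{prop:fsetfp}, by restricting the $2$-equivalence $V : [\cat F, \cat{Set}]\text-\cat{CAT}_\mathrm{rep} \simeq \cat{CAT}_\mathrm{fp}$ to suitable full subcategories on each side. All the real mathematical content has already been packaged into Propositions~\ref{prop:power-copower-fset} and~\ref{prop:fsetfp}; what remains is essentially book-keeping.

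First I would check that the two versions of the ``Lawvere'' object-condition correspond under $V$. Given a representable $[\cat F, \cat{Set}]$-category $\M$, the equivalence (a)$\Leftrightarrow$(b) of Proposition~\ref{prop:power-copower-fset} says that an object $Z \in \M$ arises as a tensor $y_n \otimes X$ if and only if it is the $n$-fold power $X^n$ in the underlying ordinary category $V\M$. Hence $\M$ is a Lawvere $[\cat F, \cat{Set}]$-category---its objects being distinct tensors $y_n \otimes X$ of a distinguished object $X$---precisely when $V\M$ is a Lawvere theory with distinguished object $X$.

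Next I would match up the morphism-conditions. In one direction, if $F : \M \to \N$ is a Lawvere functor, it strictly preserves the distinguished object and the chosen tensors; since the chosen tensors in $\M$ and $\N$ are simultaneously the chosen powers of the respective distinguished objects in $V\M$ and $V\N$, the underlying functor $VF$ strictly preserves the distinguished object and its finite powers, so is a morphism of Lawvere theories. In the opposite direction, given a morphism $F : \C \to \D$ of Lawvere theories, I would inspect the pseudoinverse $RF$ as constructed in the proof of Proposition~\ref{prop:fsetfp}: on objects $RF$ agrees with $F$, so the distinguished object is preserved strictly; and the definition of $RF$ on homs sends the universal element $1_{X^n}$ of the chosen tensor in $R\C$ to $1_{(FX)^n}$, the universal element of the chosen tensor in $R\D$, so the chosen tensors are also preserved strictly.

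I do not anticipate any significant obstacle: the point of Proposition~\ref{prop:power-copower-fset} is precisely to ensure that the notion of tensor by $y_n$ in a representable $[\cat F, \cat{Set}]$-category is captured by equational data on powers in the underlying category, and this is what allows the strictness conditions on either side to match up cleanly. Since both categories involved are $1$-categories with strict morphisms, the restricted equivalence is an equivalence of ordinary categories, as required.
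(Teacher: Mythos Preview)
Your proposal is correct and follows exactly the approach the paper takes: the paper simply states that the result is ``immediate from Proposition~\ref{prop:fsetfp}'' and gives no further proof. Your argument is precisely the unpacking of that immediacy---restricting the $2$-equivalence $V \dashv R$ of Proposition~\ref{prop:fsetfp} to the Lawvere objects on each side and checking that the strictness conditions on morphisms match---so there is nothing to add.
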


\subsection{Models of Lawvere theories as $[\cat F, \cat{Set}]$-functors}
A \emph{model} of a Lawvere theory $\cat T$ is a
finite-product-preserving functor $\cat T \to \cat{Set}$; as before,
the restriction imposed on the objects of $\cat T$  means that this is
equivalently a finite-power-preserving functor $\cat T \to \cat{Set}$,
which by Proposition~\ref{prop:fsetfp}, is equally a $[\cat F,
\cat{Set}]$-functor $R\cat T \to R(\cat{Set})$. Note that
$R(\cat{Set})$ is precisely the $[\cat F, \cat{Set}]$-category $\Ss$
defined before Proposition~\ref{prop:monadfset}, and so we have:
\begin{Prop}\label{prop:lawvfset}
  The functor which views a Lawvere theory as an $[\cat
  F, \cat{Set}]$-category fits into a pseudocommuting triangle
  \begin{equation*}
    \cd[@!C@C-3em]{
      \cat{Law} \ar[dr]_{(\thg)\text-\cat{Mod}} \ar[rr] &
      \ar@{}[d]|{\textstyle\simeq} & [\cat F, \cat{Set}]\text-\cat{CAT}_\mathrm{}\rlap{ .} \ar[dl]^{\qquad [\cat F, \cat{Set}]\text-\cat{CAT}(\thg,\,\, \Ss)}\\ & \cat{CAT}
    }
  \end{equation*} 
\end{Prop}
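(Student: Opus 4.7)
The proof is essentially a matter of assembling previously established equivalences, so I expect no significant obstacle. The plan is to reduce Proposition~\ref{prop:lawvfset} to Proposition~\ref{prop:fsetfp} by observing that, on the model side, finite products coincide with finite powers for Lawvere theories, while on the enriched side, $R\cat{Set}$ is definitionally equal to $\Ss$.

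First I would note that, because the objects of a Lawvere theory $\cat T$ are by definition the distinct finite powers of a distinguished object, a finite-product-preserving functor $\cat T \to \cat{Set}$ is the same as a finite-power-preserving functor $\cat T \to \cat{Set}$; natural transformations between such functors are unconstrained, so we obtain an identification $\cat{Mod}(\cat T) = \cat{CAT}_{\mathrm{fp}}(\cat T, \cat{Set})$, pseudonatural in $\cat T$.

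Next I would invoke Proposition~\ref{prop:fsetfp}: since $V \colon [\cat F, \cat{Set}]\text-\cat{CAT}_{\mathrm{rep}} \to \cat{CAT}_{\mathrm{fp}}$ is a $2$-equivalence with pseudoinverse $R$, taking hom-categories into $\cat{Set}$ yields an equivalence of categories
\[
[\cat F, \cat{Set}]\text-\cat{CAT}(R\cat T, R\cat{Set}) \simeq \cat{CAT}_{\mathrm{fp}}(\cat T, \cat{Set})\rlap{ ,}
\]
pseudonatural in $\cat T$. (The left-hand hom-category in $[\cat F, \cat{Set}]\text-\cat{CAT}_{\mathrm{rep}}$ agrees with that in $[\cat F, \cat{Set}]\text-\cat{CAT}$ since the former is a full sub-$2$-category.) Inspecting the construction of $R$ in the proof of Proposition~\ref{prop:fsetfp}, $R\cat{Set}$ has hom-objects $R\cat{Set}_n(X,Y) = \cat{Set}(X^n, Y)$, which is precisely the $[\cat F, \cat{Set}]$-category $\Ss$ defined just before Proposition~\ref{prop:monadfset}. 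Thus the equivalence above reads
\[
[\cat F, \cat{Set}]\text-\cat{CAT}(R\cat T, \Ss) \simeq \cat{Mod}(\cat T)\rlap{ .}
\]

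Finally, I would observe that, under the equivalence of Proposition~\ref{prop:law}, the functor $\cat{Law} \to [\cat F, \cat{Set}]\text-\cat{CAT}$ in the statement sends $\cat T$ to a Lawvere $[\cat F, \cat{Set}]$-category isomorphic to $R\cat T$; indeed, both are constructed by taking the objects of $\cat T$ and setting hom-objects at arity $n$ to be $\cat T(X^n, Y)$. Combining this with the displayed equivalence gives the required pseudocommuting triangle. The only point that requires a little care---and which I consider the main (minor) obstacle---is checking that the pseudonaturality of the equivalence in $\cat T$ is compatible with the pseudonaturality of $R$, but this is immediate from the fact that all constructions involved are instances of applying the $2$-equivalence $V \dashv R$ to the object $\cat{Set}$.
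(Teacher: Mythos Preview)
Your proposal is correct and follows essentially the same approach as the paper: the paper's argument (given in the paragraph immediately preceding the proposition) also reduces the claim to Proposition~\ref{prop:fsetfp}, noting that finite-product-preservation and finite-power-preservation coincide for functors out of a Lawvere theory, and then identifies $R(\cat{Set})$ with $\Ss$. Your version is somewhat more explicit about the pseudonaturality and the identification of the top functor with $R$, but the substance is the same.
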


\section{The equivalence of finitary monads and Lawvere theories}
\subsection{The representable completion}
Having described both finitary monads on $\cat{Set}$ and Lawvere
theories in terms of $[\cat F, \cat{Set}]$-category theory, we now
describe their equivalence in the same terms. The following result is
the key to doing so.
\begin{Prop}\label{prop:rep-reflect}
  The inclusion $2$-functor $[\cat F,
  \cat{Set}]\text-\cat{CAT}_\mathrm{rep} \to [\cat F,
  \cat{Set}]\text-\cat{CAT}$ admits a left biadjoint $L$.
\end{Prop}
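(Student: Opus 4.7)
The plan is to construct $L\M$ explicitly and verify its universal property by direct calculation. Because Proposition~\ref{prop:power-copower-fset} shows tensors by the representables $y_n$ are absolute, every $[\cat F, \cat{Set}]$-functor automatically preserves them, so the inclusion $[\cat F, \cat{Set}]\text-\cat{CAT}_\mathrm{rep} \hookrightarrow [\cat F, \cat{Set}]\text-\cat{CAT}$ is locally fully faithful. Accordingly, to establish the left biadjoint it is enough, for each $\M$, to produce a representable $L\M$ equipped with a unit $\eta_\M \colon \M \to L\M$ such that precomposition with $\eta_\M$ induces an equivalence
\[
  [\cat F, \cat{Set}]\text-\cat{CAT}(L\M, \N) \xrightarrow{\simeq} [\cat F, \cat{Set}]\text-\cat{CAT}(\M, \N)
\]
for every representable $\N$, pseudonaturally in $\N$ and pseudofunctorially in $\M$.

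The construction mimics the free category with finite powers on an ordinary category, adapted to the enriched setting. Let $L\M$ have object-set $\ob(\M) \times \cat F$---write $(X, n)$ for a typical object, intended as the formal tensor $y_n \otimes X$---and hom-objects
\[
  L\M_k\bigl((X, n), (Y, m)\bigr) = \M_{kn}(X, Y)^m\rlap{.}
\]
The $j$-th projection of $(X, n)$ in $L\M_k$ is the $n$-tuple $\bigl(\pi_{(j-1)n+1}, \dots, \pi_{jn}\bigr) \in \M_{kn}(X, X)^n$, and composition is defined componentwise in terms of that of $\M$; verification of the clone axioms of Remark~\ref{rk:clones} reduces to those for $\M$, with axiom~(viii) being the one that does the real work. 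Each $(X, n)$ is then the tensor $y_n \otimes (X, 1)$: the required $i \in L\M_n((X, 1), (X, n)) = \M_n(X, X)^n$ is the tuple $(\pi_1, \dots, \pi_n)$, while the $p_j \in L\M_1((X, n), (X, 1)) = \M_n(X, X)$ are the $\pi_j$ themselves, and the equations~\eqref{eq:copower-char} are immediate from axioms~(vi)--(vii); so $L\M$ is representable by Proposition~\ref{prop:power-copower-fset}.

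Define $\eta_\M \colon \M \to L\M$ by $X \mapsto (X, 1)$ on objects and as the identity on hom-objects. For the universal property, given $F \colon \M \to \N$ with $\N$ representable and with chosen tensors $y_n \otimes (\thg)$, set $\bar F(X, n) = y_n \otimes FX$; on hom-objects, $\bar F$ is the composite
\[
  \M_{kn}(X, Y)^m \xrightarrow{(F_{kn})^m} \N_{kn}(FX, FY)^m \xrightarrow{\cong} \N_k(y_n \otimes FX, y_m \otimes FY)\rlap{,}
\]
where the second bijection arises from the tensor universal property $\N_k(y_n \otimes FX, Z) \cong \N_{kn}(FX, Z)$ combined with the $m$-fold power universal property of Proposition~\ref{prop:power-copower-fset}(b). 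Then $\bar F$ is an $[\cat F, \cat{Set}]$-functor with $\bar F \eta_\M \cong F$. For essential surjectivity and full-faithfulness of the restriction functor, any $\tilde F \colon L\M \to \N$ preserves the absolute tensors $(X, n) = y_n \otimes (X, 1)$, so is determined up to canonical isomorphism by its value at $(X, 1)$; an analogous argument handles $[\cat F, \cat{Set}]$-transformations, whose components at $(X, n)$ are forced by the components at $(X, 1)$ via the power universal property.

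The construction is elementary, and the principal obstacle is simply bookkeeping: checking~(viii) for the componentwise composition and tracking the hom-object identifications throughout the universal-property argument. The reason we obtain only a \emph{bi}adjoint rather than a strict $2$-adjoint is that tensors $y_n \otimes X$ in $\N$ are determined only up to canonical isomorphism, so that $\bar F$ is similarly unique only up to canonical isomorphism; if one restricts to representable $\N$ with strictly chosen tensors---as in the Lawvere $[\cat F, \cat{Set}]$-categories of Section~\ref{sec:lawv}---this will refine to a strict $2$-adjunction.
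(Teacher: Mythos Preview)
Your construction of $L\M$, the unit $\eta_\M$, and the explicit hom-objects $L\M_k((X,n),(Y,m)) = \M_{kn}(X,Y)^m$ coincide exactly with what the paper writes down, and your verification that $(X,n)$ is the tensor $y_n \otimes (X,1)$ via the equational characterisation in Proposition~\ref{prop:power-copower-fset}(c) is correct. The argument is sound.

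Where you differ from the paper is in how you justify the universal property. You verify the equivalence $[\cat F,\cat{Set}]\text-\cat{CAT}(L\M,\N)\simeq[\cat F,\cat{Set}]\text-\cat{CAT}(\M,\N)$ by hand: build $\bar F$ from a choice of tensors in $\N$, then use absoluteness to show any extension is canonically isomorphic to this one. The paper instead invokes the general enriched-categorical machinery of completion under a class of absolute colimits: by~\cite[Proposition~5.62]{Kelly1982Basic} the unit of such a completion is characterised by being fully faithful with codomain generated under the relevant colimits, and the paper obtains $L\M$ as the closure of $\M$ inside its Cauchy completion $Q\M$ under tensors by representables, deriving the explicit formula $L\M(X^{(n)},Y^{(m)}) = y_m \otimes \M(X,Y) \otimes h_n$ from the description of $Q\M$ in~\cite{Betti1982Cauchy-completion} and the duality $h_n \dashv y_n$. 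The paper's route is the one aligned with its thesis---that the monad--theory correspondence \emph{is} Cauchy-completion---and it avoids your bookkeeping with the clone axioms; your route is more self-contained and makes the representability of $L\M$ explicit rather than inherited from $Q\M$.

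One small point: your opening remark that absoluteness of the $y_n$-tensors makes the inclusion ``locally fully faithful'' is a non sequitur---$[\cat F,\cat{Set}]\text-\cat{CAT}_\mathrm{rep}$ is by definition a \emph{full} sub-$2$-category, so this is automatic. What absoluteness actually buys you is the later step, where any $\tilde F \colon L\M \to \N$ must preserve the tensors and hence is determined by its restriction along $\eta_\M$.
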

As with Proposition~\ref{prop:fsetfp}, this result is an essentially
standard one about representability in multicategories; see, for
instance, \cite[Section~7]{Hermida2000Representable}. The point is not
that the result is new, but rather that the proof we give  involves only
 standard enriched-categorical notions.
\begin{proof}
  An $[\cat F, \cat{Set}]$-category is representable just when it
  admits certain absolute colimits, namely tensors by representables;
  so $L$ must be given by completion under these
  colimits. By~\cite[Proposition~5.62]{Kelly1982Basic} and the
  absoluteness of the colimits at issue, the unit $J \colon \M \to
  L\M$ of the biadjunction at $\M$ is characterised by three
  properties: (i) $J$ is fully faithful; (ii) $L\M$ is representable;
  (iii) every object of $L\M$ is a tensor by some $y_n$ of an
  object of $\M$. We may thus obtain $L\M$ by first forming the
  \emph{Cauchy completion} $Q\M$ of $\M$---its completion under all
  absolute colimits, described
  in~\cite[Section~1]{Betti1982Cauchy-completion}---and then taking
  $L\M$ to be the closure of $\M$ in $Q\M$ under tensors by
  representables.

  Since tensors by representables satisfy $y_1 \otimes X \cong X$ and
  $y_n \otimes (y_m \otimes X) \cong (y_n \otimes y_m) \otimes X \cong
  y_{nm} \otimes X$, this closure process converges after one step, and
  so we may as well take the objects of $L\M$ to be of the form
  $X^{(n)}$, representing the tensor of $X \in \M$ by $y_n$. Now from
  the description of $Q\M$ given in~\cite{Betti1982Cauchy-completion},
  the hom-objects of $L\M$ are given by
  \[L\M(X^{(n)}, Y^{(m)}) = Q\M(y_n \otimes X, y_m \otimes Y) = y_m
  \otimes \M(X,Y) \otimes h_n\rlap{ ,}\] where as in
  Remark~\ref{rk:absolute}, $h_n = (\thg) \times n$ is the left dual
  of $y_n$ in $[\cat F, \cat{Set}]$. Identities and composition in
  $L\M$ are obtained from those of $\M$ together with the unit maps $I
  \to y_n \otimes h_n$ (for the identities) and counit maps $h_m
  \otimes y_m \to I$ (for the composition). Spelling this out
  explicitly, we have that
  \[
  L\M_k(X^{(n)}, Y^{(m)}) = \M_{nk}(X,Y)^m
  \]
  with identities given by $(\pi_1, \dots, \pi_n) \in L\M_1(X^{(n)},
  X^{(n)}) = \M_n(X,X)^n$, and composition $\L\M_p(Y^{(m)}, Z^{(k)})
  \times \L\M_q(X^{(n)}, Y^{(m)})^p \to L\M_q(X^{(n)}, Z^{(k)})$
   by
  \begin{align*}
    \M_{mp}(Y,Z)^k \times \M_{nq}(X,Y)^{mp} \to \M_{nq}(X, Z)^k\\
    (f_1, \dots, f_k,\, \vec g) \mapsto (f_1 \circ \vec g, \dots, f_k
    \circ \vec g)\rlap{ .}
  \end{align*}
  The tensor of $X^{(n)}$ by $y_m$ is $X^{(nm)}$, as witnessed by the
  element $i = (\pi_1, \dots, \pi_{nm}) \in L\M_m(X^{(n)}, X^{(nm)}) =
  \M_{nm}(X,X)^{nm}$.  Finally, the reflection map $\M \to L\M$ sends
  $X$ to $X^{(1)}$ and is the identity on homs. \end{proof}
\begin{Rk}
  A priori, the universal property of biadjunction only makes $L$
  pseudofunctorial in $\M$; however, it is easy to see that we may
  make it strictly $2$-functorial, by defining its action $LF \colon
  L\M \to L\N$ on morphisms by $(LF)(X^{(n)}) = (FX)^{(n)}$ and with
  the evident action on homs.
\end{Rk}
% \begin{Rk}
%   Of course, since the $[\cat F, \cat{Set}]$-category $L\M$
%   constructed is representable, it is determined by its underlying
%   ordinary category $VL\M$. This has objects $X^{(n)}$ and morphisms
%   $X^{(n)} \to Y^{(m)}$ being elements of $\M_n(X,Y)^m$, with
%   composition and identities defined as above. Because $L\M$ is
%   representable, $VL\M$ has finite powers, with the $n$th power of
%   $X^{(m)}$ being $X^{(nm)}$.
% \end{Rk}
We have now developed enough $[\cat F, \cat{Set}]$-category theory to
prove:
\begin{Prop}
  There is an equivalence $\cat{Mnd}_f(\cat{Set}) \simeq \cat{Law}$,
  fitting into a pseudocommuting triangle of functors
  \begin{equation}\label{eq:models}
    \cd[@!C@C-2em]{
      \cat{Mnd}_f(\cat{Set})\ar[dr]_{(\thg)\text-\cat{Alg}} \ar[rr] &
      \ar@{}[d]|{\textstyle\simeq} &\cat{Law}\rlap{ .} \ar[dl]^{\qquad (\thg)\text-\cat{Mod}}\\ & \cat{CAT}
    }
  \end{equation} 
\end{Prop}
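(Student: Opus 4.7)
The plan is to derive the equivalence by combining the representable reflector $L$ from Proposition~\ref{prop:rep-reflect} with the identifications of finitary monads as one-object $[\cat F, \cat{Set}]$-categories (Proposition~\ref{prop:mndf}) and of Lawvere theories as Lawvere $[\cat F, \cat{Set}]$-categories (Proposition~\ref{prop:law}); the triangle will then follow from the universal property of $L$ together with the hom-descriptions of algebras and models in Propositions~\ref{prop:monadfset} and~\ref{prop:lawvfset}.

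First I would show that $L$ restricts to an equivalence between the categories of one-object and of Lawvere $[\cat F, \cat{Set}]$-categories. In one direction, given a one-object $[\cat F, \cat{Set}]$-category $\Sigma A$ with sole object $X$, the explicit description in the proof of Proposition~\ref{prop:rep-reflect} makes $L(\Sigma A)$ have objects $X^{(n)}$ for $n \in \cat F$, each canonically the tensor $y_n \otimes X^{(1)}$ of the distinguished object by a distinct representable; so $L(\Sigma A)$ is Lawvere. The formula $(LF)(X^{(n)}) = (FX)^{(n)}$ in the Remark following Proposition~\ref{prop:rep-reflect} then shows that $L$ sends monoid morphisms to Lawvere functors, strictly preserving both the distinguished object and its chosen tensors. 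A pseudoinverse $U$ sends a Lawvere $[\cat F, \cat{Set}]$-category $\cat T$ with distinguished object $X$ to the one-object full sub-$[\cat F, \cat{Set}]$-category on $X$, which is a finitary monad. Strict equality $UL(\Sigma A) = \Sigma A$ is immediate from $L(\Sigma A)_k(X^{(1)}, X^{(1)}) = A_k$; conversely, the inclusion $U\cat T \hookrightarrow \cat T$ of a Lawvere category is fully faithful, with representable codomain in which every object is a tensor by some $y_n$ of the distinguished object---exactly the three properties characterising the unit of $L$ in the proof of Proposition~\ref{prop:rep-reflect}---so $\cat T \cong LU\cat T$ naturally. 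Composing with Propositions~\ref{prop:mndf} and~\ref{prop:law} then yields $\cat{Mnd}_f(\cat{Set}) \simeq \cat{Law}$.

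For the pseudocommuting triangle, Propositions~\ref{prop:monadfset} and~\ref{prop:lawvfset} recast $(\thg)\text-\cat{Alg}$ and $(\thg)\text-\cat{Mod}$ as the functors $[\cat F, \cat{Set}]\text-\cat{CAT}(\thg, \Ss)$; since $\cat{Set}$ has finite powers, $\Ss$ is representable by Proposition~\ref{prop:fsetfp}, and so the biadjunction of Proposition~\ref{prop:rep-reflect} yields a pseudonatural equivalence
\[
[\cat F, \cat{Set}]\text-\cat{CAT}(\mathsf T, \Ss) \simeq [\cat F, \cat{Set}]\text-\cat{CAT}(L\mathsf T, \Ss)\rlap{ ,}
\]
which combines with the identifications of the previous paragraph to give~\eqref{eq:models}. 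The most delicate point will be the isomorphism $\cat T \cong LU\cat T$: one must check that identities, composition, and chosen tensors in $LU\cat T$ transport along the hom-isomorphisms to those of $\cat T$. This amounts to chasing the formulas from the proof of Proposition~\ref{prop:rep-reflect} through the axioms for a Lawvere $[\cat F, \cat{Set}]$-category, and while I expect no substantive obstruction, it is where strictness of the distinguished object and chosen tensors must be carefully reconciled with the biadjunction's universal property.
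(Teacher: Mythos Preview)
Your proposal is correct and follows essentially the same approach as the paper: both construct the equivalence by pairing the representable reflector $L$ with the functor taking a Lawvere $[\cat F,\cat{Set}]$-category to its one-object full subcategory on the distinguished object, use the characterisation (i)--(iii) of the unit of $L$ from Proposition~\ref{prop:rep-reflect} to establish $LU\cat T \cong \cat T$, and derive the triangle from the biadjunction together with the representability of $\Ss$ and Propositions~\ref{prop:monadfset} and~\ref{prop:lawvfset}. Your ``delicate point'' is handled in the paper by the observation that the equivalence $L\M_X \simeq \M$ is bijective on objects and hence an isomorphism.
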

\begin{proof}
  To show that $\cat{Mnd}_f(\cat{Set}) \simeq \cat{Law}$, it suffices
  by Propositions~\ref{prop:mndf} and~\ref{prop:law} to exhibit an
  equivalence between the category $\A$ of one-object $[\cat F,
  \cat{Set}]$-categories and the category $\B$ of Lawvere $[\cat F,
  \cat{Set}]$-categories and Lawvere functors.  In one direction,
  there is a functor $\B \to \A$ sending each Lawvere $\M$ to the
  one-object sub-$[\cat F, \cat{Set}]$-category $\M_X$ on the
  distinguished object $X$. In the other, if $\M$ is a $[\cat F,
  \cat{Set}]$-category with unique object $X$, then $L\M$ becomes
  Lawvere when equipped with the distinguished object $X^{(1)}$; and
  so we have a functor $\A \to \B$. If $\M$ has one object, then
  clearly $(L\M)_{X^{(1)}} \cong \M$; so the composite $\A \to \B \to
  \A$ is isomorphic to the identity. On the other hand, if $\M$ is
  Lawvere, then the inclusion $\M_X \to \M$ satisfies conditions
  (i)--(iii) from the proof of Proposition~\ref{prop:rep-reflect}, and
  so exhibits $\M$ as the free representable $[\cat F,
  \cat{Set}]$-category on $\M_X$; thus $L\M_X \simeq \M$. This equivalence is in fact bijective on objects, so
  that $L\M_X \cong \M$ and the composite $\B \to \A \to \B$ is
   isomorphic to the identity, as required.

  Finally, we must show that the triangle~\eqref{eq:models} commutes
  to within pseudonatural equivalence. Consider the diagram
  \begin{equation*}
    \cd[@!C@C-4em]{
      \cat{Mnd}_f(\cat{Set})\ar[d] \ar[rr] \twocong{drr} & &
      \cat{Law} \ar[d] \rlap{ .} \\
      [\cat F, \cat{Set}]\text-\cat{CAT} \ar[rr]^L  \ar[dr]_{[\cat F,
        \cat{Set}]\text-\cat{CAT}(\thg,\,\, \Ss)\qquad}& \ar@{}[d]|{\textstyle\simeq} &
      [\cat F, \cat{Set}]\text-\cat{CAT}_\mathrm{rep}  \ar[dl]^{\qquad [\cat F, \cat{Set}]\text-\cat{CAT}(\thg,\,\, \Ss)} \\ &
      \cat{CAT}
    }
  \end{equation*}

  The top square commutes to within isomorphism by our construction of
  the equivalence $\cat{Mnd}_f(\cat{Set}) \simeq \cat{Law}$; whilst
  the lower triangle commutes to within pseudonatural equivalence
  because $L$ is a bireflector into representable $[\cat F,
  \cat{Set}]$-categories and $\Ss$ is representable. Finally, by
  Propositions~\ref{prop:monadfset} and~\ref{prop:lawvfset}, the
  composites down the left and the right are pseudonaturally
  equivalent to $(\thg)\text-\cat{Alg}$ and $(\thg)\text-\cat{Mod}$
  respectively; whence the result.
\end{proof}

\section{Functorial semantics}
One advantage of Lawvere theories over finitary monads is the relative
ease with which we may consider models in categories other than
$\cat{Set}$. A model of a Lawvere theory $\cat T$ in a category $\C$
with finite powers is simply a finite-power-preserving functor $\cat T
\to \C$, and the formulation makes it apparent that any
finite-power-preserving functor between theories $\cat T \to \cat S$
or semantic domains $\C \to \D$ induces a functor $\cat{Mod}(\cat S,
\C) \to \cat{Mod}(\cat T, \C)$ or $\cat{Mod}(\cat T, \C) \to
\cat{Mod}(\cat T, \D)$ by pre- or postcomposition, respectively.  For
a finitary monad on $\cat{Set}$, by contrast, it requires work to
define algebras in other categories, and further work to verify the
functoriality of such a definition in the monad $\mathsf T$ and
the semantic domain $\C$.

The perspective of $[\cat F, \cat{Set}]$-category theory dissolves
this apparent distinction. We retain the functorial semantics for
Lawvere theories by defining $\cat{Mod}(\mathsf T, \C) = [\cat F,
\cat{Set}]\text-\cat{CAT}(R\cat T, R\C)$, but now have an equally
clear functorial semantics for monads on taking $\cat{Alg}(\mathsf T,
\C) = [\cat F, \cat{Set}]\text-\cat{CAT}(\Sigma \mathsf T,
R\C)$. Moreover, the two kinds of semantics are equivalent: when $\cat
T$ is the Lawvere theory corresponding to $\mathsf T$, we have $R\cat T
\cong L\Sigma \mathsf T$, so that by the universal property of the
representable completion,
\[
\cat{Alg}(\mathsf T, \C) = [\cat F, \cat{Set}]\text-\cat{CAT}(\Sigma\mathsf
T, R\C) \simeq  [\cat F, \cat{Set}]\text-\cat{CAT}(R\cat
T, R\C) = \cat{Mod}(\cat T, \C)
\]
pseudonaturally in the representable $[\cat F, \cat{Set}]$-category
$\C$.

% In fact, it is the formulation using finitary monads which now appears
% the more flexible, since $[\cat F, \cat{Set}]$-functors out of a
% finitary monad $\mathsf T$ provide a good notion of algebra in
% \emph{any} $[\cat F, \cat{Set}]$-category $\M$, whether representable
% or not, whereas $[\cat F, \cat{Set}]$-functors from a Lawvere theory
% $\cat T$ into a general $\M$ only pick out those models for which
% every finite power of the underlying object exists in $\M$.

In elementary terms, a $\mathsf T$-algebra $\Sigma \mathsf T \to R \C$
is given by an object $X \in \C$ and a monad morphism $\mathsf T \to
\mathsf{End}(X)$; here, $\mathsf{End}(X) = R\C(X,X)$ is the finitary
monad on $\cat{Set}$ with action on finite sets $n \mapsto
\C(X^n,X)$. A map between $\mathsf T$-algebras is a morphism $f \colon
X \to Y$ of $\C$ making the square of finitary endofunctors
\begin{equation*}
  \cd[@C+1em]{
     T \ar[r] \ar[d] & R\C(X,X) \ar[d]^{R\C(X,f)} \\
     R\C(Y,Y) \ar[r]^-{ R\C(f,X)} &  R\C(X,Y)
  }
\end{equation*}
commute; the functor $\cat{Alg}(\mathsf T, \C) \to \cat{Alg}(\mathsf
S, \C)$ induced by a map of monads $\mathsf S \to \mathsf T$ sends
$\mathsf T \to \mathsf{End}(X)$ to $\mathsf S \to \mathsf T \to
\mathsf{End}(X)$; whilst the functor $\cat{Alg}(\mathsf T, \C) \to
\cat{Alg}(\mathsf T, \D)$ induced by a finite-power-preserving $F
\colon \C \to \D$ sends $\mathsf T \to \mathsf{End}(X)$ to $\mathsf T
\to \mathsf{End}(X) \to \mathsf{End}(FX)$, where $\mathsf{End}(X) \to
\mathsf{End}(FX)$ is the finitary monad map defined at $n$ by $\C(X^n,
X) \to \D(F(X^n),FX) \cong \D((FX)^n, FX)$. Let us make it clear that
these definitions are by no means new\footnote{The construction of
  $\mathsf{End}(X)$ is in~\cite[Section~2]{Street1972The-formal} but
   dates back to Lawvere's thesis~\cite{Lawvere1963Functorial}; the
  analysis in the form just given is essentially
  in~\cite[Section~3]{Kelly1974Coherence}}; the point is that, from
the $[\cat F, \cat{Set}]$-enriched viewpoint, they are essentially
forced upon us.

% point is not that this definition is new, but rather that the manner
% in which we arrive at it makes . In particular, the formulation in
% terms of $[\cat F, \cat{Set}]$-category theory makes it obvious that
% the assignation $(\mathsf T, \C) \mapsto \cat{Alg}_\C(\mathsf T)$ is
% functorial both in $\mathsf T$ and $\C$.

% We may also describe morphisms between $\mathsf T$-algebras in
% $\C$ in these terms. Such a morphism is defined, of course, to be a natural
% transformation between $[\cat F, \cat{Set}]$-functors $\mathsf T \to
% R\C$, which is equivalently an $[\cat F, \cat{Set}]$-functor $\mathsf
% T \to (R\C)^{\cat 2}$; but since $\cat{CAT}_\mathrm{fp} \simeq [\cat
% F, \cat{Set}]\text-\cat{CAT}_\mathrm{rep}$, we have 
% $(R\C)^\mathbf 2 \cong R(\C^\mathbf 2)$. 

\section{Left adjoints to algebraic functors}
A functor $\cat{Alg}(\mathsf T, \C) \to \cat{Alg}(\mathsf S, \C)$ or
$\cat{Mod}(\cat T, \C) \to \cat{Mod}(\cat S, \C)$ induced by
precomposition with a map of finitary monads or of Lawvere theories is
called an \emph{algebraic functor}. Under reasonable hypotheses on
$\C$, such functors have left adjoints; in this final section, we consider
the case where such adjoints can be constructed from $[\cat F,
\cat{Set}]$-enriched left Kan extensions in the sense
of~\cite{Kelly1982Basic}.

% In the first instance, we will work
% with finitary monads rather than Lawvere theories, since the
% calculations are easier in this case; we justify this in
% Section~\ref{sec:justify} below.

\subsection{Left Kan extensions}\label{sec:leftkan}
Given $\V$-functors $F \colon \A \to \B$ and $G  \colon \A \to
\D$, the \emph{left Kan extension}\footnote{We follow Kelly in
  reserving the name ``left Kan extension'' for
  what~\cite{Dubuc1970Kan-extensions,Mac-Lane1971Categories} call a
  \emph{pointwise} left Kan extension: one computed at each object by
  a weighted colimit in the codomain category. A left adjoint to an
  algebraic functor may exist without being computed by pointwise Kan
  extension---for example, $\cat{Alg}(\mathsf S, \C)$ and
  $\cat{Alg}(\mathsf T, \C)$ could be locally presentable, and the
  adjoint constructed by the methods
  of~\cite{Kelly1980A-unified}---but that relies on something more
  than intrinsic $[\cat F, \cat{Set}]$-categorical properties of $\C$,
  and so is outside our remit.} of $G$ along $F $ is the
$\V$-functor $\Lan_F  G \colon \B \to \D$ defined by the colimit
formula $(\Lan_F G)(B) = \B(F \thg,B) \star G$.  If
$\Lan_F G$ exists for all $G \colon \A \to \D$, then
by~\cite[Theorem~4.43]{Kelly1982Basic} it provides the values of a
(ordinary) functor
\[
\Lan_F \colon \V\text-\cat{CAT}(\A, \D) \to \V\text-\cat{CAT}(\B,
\D)\rlap{ ,}\] left adjoint to precomposition with $F $. When $\V$ is
$[\cat F, \cat{Set}]$, $\D = R\C$ is a representable $[\cat F,
\cat{Set}]$-category, and $F \colon \Sigma \mathsf S \to \Sigma
\mathsf T$ is the $[\cat F, \cat{Set}]$-functor induced by a map of
finitary monads, we see that $\Lan_F $ must provide a left adjoint for
the algebraic functor $\cat{Alg}(\mathsf T, \C) \to \cat{Alg}(\mathsf
S, \C)$; similarly, when $F$ is the $[\cat F, \cat{Set}]$-functor
 $R\cat S \to R\cat T$ induced by a map of Lawvere theories, $\Lan_{F }$
must provide a left adjoint for the algebraic functor $\cat{Mod}(\cat
T, \C) \to \cat{Mod}(\cat S, \C)$.
%In the first instance, we shall deal only with
%the (calculationally simpler) construction of $\Lan_F $ for maps of
%finitary monads; we return to Lawvere theories in
%Section~\ref{sec:justify}.

\subsection{Relative tensors}\label{sec:relativetensors}
In taking the left Kan extensions yielding left adjoints to algebraic
functors, we will need a more general kind of weighted colimit than
the tensors introduced previously.
% Tensors are weighted colimits of diagrams $\I
% \to \C$ on the unit $\V$-category $\I$; we will need weighted colimits
% of diagrams $A \to \C$ on arbitrary one-object $\V$-categories.  
The following definitions are special cases of ones
in~\cite{Street1983Enriched}; note that the material
of~\cite{Kelly1982Basic} is not applicable, as it assumes that $\V$ is
biclosed symmetric, whereas we assume only right closure without
symmetry.

Let $\Ss = \Sigma S$ be a one-object $\V$-category; by a \emph{right
  $\Ss$-module}, we mean a right module for the underlying monoid
$S$. If $b \colon B \otimes S \to B$ is a right $\Ss$-module, then so
too is $A \otimes b \colon A \otimes B \otimes S \to A \otimes B$, and
the assignation $(A,b) \mapsto A \otimes b$ underlies a monoidal
action $\V \times \Ss\text-\cat{Mod} \to
\Ss\text-\cat{Mod}$. The action is right closed, with $\spn{B,C}$
being defined as the equaliser of the two maps
\[
[B,C] \xrightarrow{[b,1]} [B \otimes S, C] \quad \text{and} \quad [B,C]
\xrightarrow{\thg \otimes S} [B \otimes S, C \otimes S]
\xrightarrow{[1,c]} [B\otimes S, C]\rlap{ ,}
\]
so that, by the argument of Section~\ref{sec:monadalgfset}, we have an
enrichment of $\Ss\text-\cat{Mod}$ to a tensored $\V$-category; when
seen in this way, we write it as  $\P \Ss$.

Given a $\V$-functor $\Ss \to \C$, comprising an object $X \in \C$ and a
monoid morphism $x \colon S \to \C(X,X)$ which we might think of as a
\emph{left $\Ss$-action on $X$}, we have a lifting of the hom-functor
$\C(X,\thg) \colon \C \to \V$ through the forgetful $\P \Ss \to \V$,
obtained by equipping each $\C(X,Y)$ with the right $\Ss$-action
\[\C(X,Y) \otimes S \xrightarrow{1 \otimes x} \C(X,Y) \otimes \C(X,X)
\xrightarrow{m} \C(X,Y)\rlap{ .}\] Now given $X$ with its left
$\Ss$-action and a right $\Ss$-module $A$, the \emph{relative tensor of
  $X$ by $B$ over $\Ss$} is given by an object $A \otimes_\Ss X \in
\C$ and a map $i \colon A \to \C(X,A \otimes_\Ss X)$ of right
$\Ss$-modules such that, for every $Y \in \C$, the map
\[
\C(A \otimes_\Ss X,Y) \xrightarrow{\C(X,\thg)} \P \Ss(\C(X,A \otimes_\Ss X), \C(X,Y)) \xrightarrow{\P
  \Ss(i, 1)} \P \Ss(A, \C(X,Y))
\]
is invertible in $\V$. 

\subsection{Relative tensors in $[\cat F, \cat{Set}]$-categories}
The key to describing relative tensors in $[\cat F,
\cat{Set}]$-categories is the following result, a standard part of the
folklore on algebraic theories:

\begin{Prop}\label{prop:nerve}
  If $\mathsf S$ is a finitary monad on $\cat{Set}$ and $\cat S$ the
  corresponding Lawvere theory, then $\P \Sigma \mathsf S \cong R[\cat
  S^\op, \cat{Set}]$.
\end{Prop}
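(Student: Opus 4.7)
The plan is to reduce to an equivalence of underlying ordinary categories, leveraging the $2$-equivalence $V \colon [\cat F, \cat{Set}]\text-\cat{CAT}_\mathrm{rep} \to \cat{CAT}_\mathrm{fp}$ of Proposition~\ref{prop:fsetfp}. First I would verify that $\P \Sigma \mathsf S$ is a representable $[\cat F, \cat{Set}]$-category: by the construction in Section~\ref{sec:relativetensors}, it is tensored, with the tensor of a right $\mathsf S$-module $M$ by $A \in [\cat F, \cat{Set}]$ being the substitution tensor $A \otimes M$ equipped with the $\mathsf S$-action inherited from $M$. In particular the $[\cat F, \cat{Set}]$-power $M^n$ coincides with the pointwise power $k \mapsto (Mk)^n$, carrying the diagonal right $\mathsf S$-action.

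Second, I would establish the classical equivalence $V \P \Sigma \mathsf S \simeq [\cat S^\op, \cat{Set}]$. Unfolding the coend $(M \otimes \mathsf S)(k) = \int^j Mj \times (\mathsf S k)^j$, a right $\mathsf S$-module structure on $M \colon \cat F \to \cat{Set}$ amounts to an action $(m, \vec s) \mapsto m \cdot \vec s \colon Mj \times (\mathsf S k)^j \to Mk$ satisfying associativity and unit laws, the latter of which forces the $\cat F$-functoriality of $M$ to be recovered from acting by variables $\eta \colon I \to \mathsf S$. Since $(\mathsf S k)^j = \cat S(X^k, X^j)$, this is precisely the data of a functor $F \colon \cat S^\op \to \cat{Set}$ with $F(X^k) \defeq Mk$ and $F(\vec s \colon X^k \to X^j)(m) \defeq m \cdot \vec s$; the associativity axiom for the module matches the composition axiom for a presheaf under the identification of composition in $\cat S$ with the multiplication of the monoid $\mathsf S$. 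Morphisms on both sides correspond as right $\mathsf S$-linear maps $M \to N$ versus natural transformations of presheaves, giving an isomorphism of categories.

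Third, I would observe that finite powers agree under this correspondence: powers in $[\cat S^\op, \cat{Set}]$ are pointwise, and by the first step, so are powers in $V \P \Sigma \mathsf S$. Thus the isomorphism lifts to an isomorphism in $\cat{CAT}_\mathrm{fp}$, and applying $R$ gives $R V \P \Sigma \mathsf S \cong R [\cat S^\op, \cat{Set}]$; combined with the canonical $RV\M \cong \M$ from the proof of Proposition~\ref{prop:fsetfp}, this yields the desired $\P \Sigma \mathsf S \cong R [\cat S^\op, \cat{Set}]$.

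The main obstacle is the bookkeeping in the second step: carefully matching the contravariant functoriality of $F$ on a morphism $\vec s \colon X^k \to X^j$ in $\cat S$ with the coend-derived right action of the corresponding tuple $\vec s \in (\mathsf S k)^j$ on $M$, and checking that the unit axiom for the module genuinely recovers the $\cat F$-functoriality of $M$ from the action by $\eta$-images. Once the bijection on objects and hom-sets is set up cleanly, the preservation of powers and the transport across $V$ are routine.
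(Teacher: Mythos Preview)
Your proposal is correct and follows essentially the same route as the paper's proof: both observe that $\P \Sigma \mathsf S$ is tensored, hence representable, and then reduce via Proposition~\ref{prop:fsetfp} to showing that the underlying ordinary category is (isomorphic to) $[\cat S^\op, \cat{Set}]$, which is done by unfolding the coend formula for $M \otimes \mathsf S$ and matching right $\mathsf S$-module data against presheaf data. You are in fact a bit more explicit than the paper in checking that the identification respects finite powers (the paper elides this, simply asserting that it suffices to identify the underlying categories), but this is a minor expository difference rather than a difference of strategy.
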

\begin{proof}
  By its construction, $\P \Sigma \mathsf S$ is a tensored $[\cat F,
  \cat{Set}]$-category, and so in particular representable; it thus
  suffices to show that its underlying ordinary category is isomorphic
  to $[\cat S^\op, \cat{Set}]$.  Now, for any $A \in [\cat F,
  \cat{Set}]$, to give a map $ A\otimes S \to A$ is to give maps $An
  \times (Sm)^n \to Am$ or equally maps $(Sm)^n \to (Am)^{An}$,
  natural in $n$ and $m$. Since $\cat S(n,m) = (Sn)^m$,
  this is to give a graph morphism $\cat S^\op \to \cat{Set}$ which
  (by naturality in $n$ and $m$) restricts along $\cat F \to \cat
  S^\op$ to give back $A$. Imposing the requirement that $A \otimes S
  \to A$ satisfy the unit and associativity conditions for a right
  module now forces the graph morphism $\cat S^\op \to \cat{Set}$ to
  be a functor, so that, in sum, a right $\Sigma \mathsf S$-module is equally
  a pair of $A \colon \cat F \to \cat{Set}$ together with an extension
  of $A$ through $\cat F \to \cat{S}^\op$; which is equally just a
  functor $\cat S^\op \to \cat{Set}$. Arguing similarly for the
  morphisms, we conclude that the underlying category of $\P \Sigma \mathsf
  S$ is isomorphic to $[\cat S^\op, \cat{Set}]$, as claimed.
\end{proof}

We now characterise relative tensors in a representable $[\cat F,
\cat{Set}]$-category in terms of colimits in the underlying ordinary
category which distributive over finite powers. First we make the
sense of this distributivity precise.  Let $\C$ be a category with
finite powers, and $D \colon \A \to \C$ a functor whose values are
taken in powers $X^n$ of some fixed object $X$ of $\C$. Suppose that
$i \colon D \Rightarrow \Delta Z$ is a colimiting cocone for $D$.  For
each $k \in \mathbb N$, write $D^k \colon \A^k \to \C$ for the functor
$(a_1, \dots, a_k) \mapsto D(a_1) \times \dots \times D(a_k)$ (note
these products will exist by the assumption on $D$), and $i^k \colon
D^k \Rightarrow \Delta(Z^k)$ for the induced cocone with components
$i_{a_1} \times \dots \times i_{a_k}$. If the cocone $i^k$ is
colimiting for each $k$, we say that the colimit of $D$
\emph{distributes over finite powers}.

\begin{Prop}\label{prop:fsetreltensors}
  Let $\C$ be a category with finite powers, and $\mathsf S$ a
  finitary monad on $\cat{Set}$. The relative tensor of $X \colon
  \Sigma \mathsf S \to R\C$ by $A \in \P \Sigma \mathsf S$ exists if and only
  if the composite ordinary functor \vskip-0.7\baselineskip
  \begin{equation}\label{eq:tensordiagram2}
    D = \mathrm{el}\, {\tilde A} \to {\cat S} \xrightarrow{\tilde X} \C
  \end{equation}
  admits a colimit which distributes over finite powers; here, $\cat
  S$ is the Lawvere theory associated to $\mathsf S$, the presheaf
  $\tilde A \in [\cat S^\op, \cat{Set}]$ corresponds to $A \in \P
  \Sigma \mathsf S$ under Proposition~\ref{prop:nerve}, and $\tilde X \colon
  \cat S \to \C$ is the essentially-unique finite-power-preserving
  functor whose restriction $\Sigma \mathsf S \to R\cat S \to R \C$ is
  isomorphic to $X$.
\end{Prop}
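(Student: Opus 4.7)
The plan is to unpack the universal property of the relative tensor $A \otimes_{\Sigma \mathsf S} X$ level-by-level in $[\cat F, \cat{Set}]$, translate everything through Proposition \ref{prop:nerve} into the presheaf category $[\cat S^\op, \cat{Set}]$, and then identify the resulting expression with a colimit-of-powers formula. First I will observe that under $\P \Sigma \mathsf S \cong R[\cat S^\op, \cat{Set}]$, the right $\Sigma \mathsf S$-module structure on $R\C(X, Y)$ (obtained by transporting across the left action $x \colon S \to R\C(X, X)$ as in Section~\ref{sec:relativetensors}) corresponds to the presheaf $\C(\tilde X \thg, Y) \colon \cat S^\op \to \cat{Set}$, since the extension of $n \mapsto \C(X^n, Y)$ from $\cat F$ to $\cat S^\op$ is precisely precomposition with $\tilde X$. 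Unfolding the defining property of the relative tensor at hom-level $k \in \cat F$, it is thus characterised by a natural isomorphism
\begin{equation*}
  \C\bigl((A \otimes_{\Sigma \mathsf S} X)^k,\, Y\bigr) \cong [\cat S^\op, \cat{Set}]\bigl(\tilde A^k,\, \C(\tilde X \thg, Y)\bigr).
\end{equation*}

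Next I will compute the right-hand side. Expressing $\tilde A$ canonically as a colimit of representables, $\tilde A \cong \int^n \tilde A(n) \cdot \cat S(\thg, n)$, and using that binary products in $[\cat S^\op, \cat{Set}]$ distribute over colimits together with the identity $\cat S(\thg, n_1) \times \cdots \times \cat S(\thg, n_k) \cong \cat S(\thg, n_1 + \cdots + n_k)$ (as $n_1 + \cdots + n_k$ is the $k$-fold product in $\cat S$), I will rewrite
\begin{equation*}
  \tilde A^k \cong \textstyle\int^{n_1, \ldots, n_k} \bigl(\tilde A(n_1) \times \cdots \times \tilde A(n_k)\bigr) \cdot \cat S(\thg,\, n_1 + \cdots + n_k).
\end{equation*}
Applying Yoneda and the formula $\tilde X(n_1 + \cdots + n_k) = X^{n_1 + \cdots + n_k}$ then collapses the right-hand hom to $\lim_{(\mathrm{el}\,\tilde A)^k} \C(D^k, Y)$, where $D^k(a_1, \ldots, a_k) = X^{n_1 + \cdots + n_k}$ for $a_i = (n_i, \alpha_i)$, exactly as in the statement.

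The two directions then follow at once. For $(\Leftarrow)$, if $\colim D$ exists and distributes over finite powers, then $\lim_{(\mathrm{el}\,\tilde A)^k} \C(D^k, Y) \cong \C((\colim D)^k, Y)$, and the boxed isomorphism above becomes the defining universal property with $A \otimes_{\Sigma \mathsf S} X \defeq \colim D$. For $(\Rightarrow)$, if the relative tensor exists, instantiating at $k = 1$ identifies it with $\colim D$ in $\C$; instantiating at arbitrary $k$ then yields $(\colim D)^k \cong \colim D^k$, which is the distributivity of the colimit over finite powers.

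The main obstacle will be the bookkeeping in the second step---verifying that the $k$-fold product of presheaves admits the claimed coend decomposition indexed by $k$-tuples $(n_1, \ldots, n_k)$, and, more delicately, that the right module structure on $R\C(X, Y)$ is transported by Proposition \ref{prop:nerve} to the presheaf $\C(\tilde X \thg, Y)$. Both points rest on the fact that the equivalence $\P \Sigma \mathsf S \cong R[\cat S^\op, \cat{Set}]$ is compatible with the tensored $[\cat F, \cat{Set}]$-structures on either side, so that the enriched homs and powers match up as expected.
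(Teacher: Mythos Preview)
Your proposal is correct and follows essentially the same route as the paper: identify the right $\Sigma\mathsf S$-module $R\C(X,Y)$ with the presheaf $\C(\tilde X\thg,Y)$, evaluate the universal property of the relative tensor at each $k\in\cat F$ to obtain $\C(Z^k,Y)\cong[\cat S^\op,\cat{Set}](\tilde A^k,\C(\tilde X\thg,Y))$, compute $\tilde A^k$ as the coend $\int^{n_1,\dots,n_k}(\tilde An_1\times\dots\times\tilde An_k)\cdot y_{n_1+\dots+n_k}$ using cartesian closedness of the presheaf category, and read off the colimit-with-distributivity condition. The only point on which the paper is marginally more explicit is that the defining isomorphism must be induced by composition with a fixed map $i\colon\tilde A\to\C(\tilde X\thg,Z)$ of right modules---equivalently a cocone under $D$---and that it is \emph{this} cocone whose $k$-th power $i^k$ is asserted to be colimiting; you leave this implicit in your $(\Leftarrow)$ direction, but it is straightforward to supply.
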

The colimit of $D$ is thus the (unenriched) weighted colimit
$\tilde A \star \tilde X$, given in coend notation\footnote{This is
  merely notation: we are only asserting the existence of the colimit
  whose universal property is expressed by this coend, and not that of
  the copowers $\tilde An \cdot \tilde Xn$ constituting it.} by
$\int^{n \in \cat S} \tilde An \cdot \tilde Xn$. The distributivity of
the colimit over finite powers is the requirement that, for all $k \in
\mathbb N$, we have a canonical isomorphism
\[
\textstyle (\int^n \tilde An \cdot \tilde Xn)^k \cong \int^{n_1,
  \dots, n_k} (\tilde An_1 \times \dots \times \tilde An_k)
\cdot \tilde X(n_1 +  \dots + n_k)\rlap{ ,}
\]
in the sense that the evident cocone of maps exhibits the left-hand
side as the colimit on the right.  Note that if $\C$ is a cartesian
closed category, then any colimit in $\C$ distributes over finite
powers; thus if $\C$ is cartesian closed and cocomplete (in
particular, if $\C = \cat{Set}$) then $R\C$
admits all $[\cat F, \cat{Set}]$-enriched relative tensors.
%, and so every
%algebraic functor $\cat{Alg}(\mathsf T, \C) \to \cat{Alg}(\mathsf S,
%\C)$ has the left adjoint $\Lan_F $.
\begin{proof}
  For any $Y \in \C$, we induce as in
  Section~\ref{sec:relativetensors} a right $\Sigma \mathsf S$-module
  structure on the hom-object $R\C(X,Y)$, which under the isomorphism
  of Proposition~\ref{prop:nerve} is easily identified with the
  presheaf $\C(\tilde X, Y) \in [\cat S^\op, \cat{Set}]$. In these
  terms, the the universal property of the tensor $Z = A
  \otimes_\mathsf S X$ mandates isomorphisms
\begin{equation}\label{eq:tensorprop2}
  R\C(Z, Y) \cong R[\cat S^\op, \cat{Set}](\tilde A,\C(\tilde
  X,Y))
\end{equation}
in $[\cat F, \cat{Set}]$, induced by composition with a universal map
$i \colon \tilde A \to \C(\tilde X, Z)$ in $[\cat S^\op, \cat{Set}]$.
To give $i$ is to give functions $\tilde An \to \C(\tilde
Xn, Z)$ natural in $n \in \cat S$, thus a cocone $i \colon D
\Rightarrow \Delta Z$ under~\eqref{eq:tensordiagram2} with vertex
$Z$. Evaluating~\eqref{eq:tensorprop2} at $1 \in \cat F$, we see that
this cocone must be colimiting; evaluating at $k \in \cat F$, we find
that composition with $i^k$ induces isomorphisms $\C(Z^k, Y) \cong
[\cat S^\op, \cat{Set}](\tilde A^k, \C(\tilde X, Y))$. Since $[\cat
S^\op, \cat{Set}]$ is cartesian closed, we calculate that
\begin{equation}\label{eq:calculation}\begin{aligned}
  \tilde A^k = \textstyle (\int^n \tilde An \cdot y_n)^k & \cong
  \textstyle\int^{n_1, \dots, n_k} (\tilde An_1 \times \dots \times
  \tilde An_k)
  \cdot y_{n_1} \times \dots \times y_{n_k}\\
  & \cong \textstyle\int^{n_1, \dots, n_k}(\tilde An_1 \times \dots
  \times \tilde An_k) \cdot y_{n_1 + \dots + n_k}\rlap{ ,}
\end{aligned}
\end{equation}
so that $[\cat S^\op, \cat{Set}](\tilde A^k, \C_1(\tilde X, Y))$ is equally the
set of cocones $D^k \Rightarrow \Delta Y$; the natural isomorphism of
this with $\C(Z^k, Y)$ specified by~\eqref{eq:tensorprop2} now asserts
that $i^k \colon D^k \to \Delta  (Z^k)$ is a colimiting cocone, as
required.
\end{proof}

\subsection{Left adjoints to algebraic functors}
We are now finally in a position to describe when left adjoints to
algebraic functors can be obtained by $[\cat F, \cat{Set}]$-enriched
Kan extension. The construction given in the following result is once
again not new, at least when stated in the form stated in (iii) and
(iv); what is new is its abstract justification via the universal
property in (i) and (ii).

\begin{Prop}\label{prop:freetalg}
  Let $F \colon \mathsf S \to \mathsf T$ be a map of finitary monads
  on $\cat{Set}$ and $G \colon \cat S \to \cat T$ the associated map
  of Lawvere theories, inducing the left-hand square of $[\cat F,
  \cat{Set}]$-functors in:
  \begin{equation*}
    \cd{
      \Sigma \mathsf S \ar[d]_{\Sigma F} \ar[r]^{\eta} & R \cat S
      \ar[d]^{RG} \ar[rr]^{X} & & R\C\rlap{ .}\\
      \Sigma \mathsf T \ar[r]_-{\eta} & R \cat T
     }
  \end{equation*}
  Here, the maps labelled $\eta$ exhibit $R\cat S$ and $R\cat T$ as
  $L\Sigma \mathsf S$ and $L \Sigma \mathsf T$.  Let $\C$ be a
  category with finite powers, and let $X$ as displayed above be an
  $\cat S$-model in $\C$, with $X\eta \colon \Sigma \mathsf S \to R
  \C$ the corresponding $\mathsf S$-algebra. Then the
  following are equivalent:
\begin{enumerate}[(i)]
\item The Kan extension  $\Lan_{RG}(X) \colon R\cat T \to R\C$ exists;
\item The Kan extension $\Lan_{\Sigma F}(X\eta) \colon R\mathsf T \to
  R\C$ exists;
\item The ordinary functor
  \begin{equation*}
    D \colon G \downarrow A \xrightarrow{\pi_1} \cat S \xrightarrow{VX} \C
\end{equation*}
(where
  $A$ is the distinguished object of $\cat T$) admits a colimit which
  distributes over finite powers;
\item The unenriched Kan extension $\Lan_{G}(VX) \colon \cat T
  \to \C$ exists and is a finite-power-preserving functor.
\end{enumerate}
\end{Prop}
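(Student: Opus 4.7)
The plan is to prove the chain of equivalences (i) $\Leftrightarrow$ (ii) $\Leftrightarrow$ (iii) $\Leftrightarrow$ (iv), using three results already established: the biadjunction of Proposition~\ref{prop:rep-reflect}, the identification $\P \Sigma \mathsf S \cong R[\cat S^\op, \cat{Set}]$ of Proposition~\ref{prop:nerve}, and the characterisation of relative tensors in Proposition~\ref{prop:fsetreltensors}. To see (i) $\Leftrightarrow$ (ii), I would observe that since $R\C$ is representable, the universal property of the unit $\eta$ of the biadjunction $L \dashv i$ gives an equivalence $[R\cat T, R\C] \simeq [\Sigma\mathsf T, R\C]$ via restriction along $\eta$, and similarly for $\mathsf S$. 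Using the commutativity $RG \circ \eta \cong \eta \circ \Sigma F$, one verifies directly that $\Lan_{RG}(X) \circ \eta$ satisfies the universal property of $\Lan_{\Sigma F}(X\eta)$, while conversely the unique extension of $\Lan_{\Sigma F}(X\eta)$ along $\eta$ satisfies that of $\Lan_{RG}(X)$; hence the two Kan extensions exist together.

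To see (ii) $\Leftrightarrow$ (iii), I would use that $\Sigma \mathsf T$ has a single object, so $\Lan_{\Sigma F}(X\eta)$ reduces to the value at that object, namely the weighted colimit $T \otimes_{\Sigma \mathsf S} X$, where $T$ denotes $\Sigma \mathsf T$ regarded as a right $\Sigma \mathsf S$-module via $F$. Under Proposition~\ref{prop:nerve}, this right module corresponds to a presheaf $\cat S^\op \to \cat{Set}$ which I claim is precisely $\cat T(G(\thg), A)$: on objects, $\cat T(Gn, A) = Tn$ because $Gn = A^n$, and the right action of $\mathsf S$ (Kleisli composition after applying $F$) matches precomposition by morphisms $Gh$ in $\cat T$. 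The category of elements of this presheaf is the comma category $G \downarrow A$, so Proposition~\ref{prop:fsetreltensors} identifies the existence of the relative tensor with the distributed-colimit condition of (iii).

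To see (iii) $\Leftrightarrow$ (iv), the unenriched pointwise formula gives
\begin{equation*}
\Lan_G(VX)(A^k) = \colim\bigl(G \downarrow A^k \to \cat S \xrightarrow{VX} \C\bigr) \cong \textstyle\int^n \cat T(Gn, A^k) \cdot VXn
\end{equation*}
whenever it exists, and the natural isomorphism $\cat T(Gn, A^k) \cong \cat T(Gn, A)^k$ combined with a coend manipulation of the form~\eqref{eq:calculation} shows that this exists and agrees with $\bigl(\Lan_G(VX)(A)\bigr)^k$ precisely when the colimit at $A$ exists and distributes over $k$-fold powers. Since every object of $\cat T$ is some $A^k$, this is equivalent to the joint existence and finite-power-preservation of $\Lan_G(VX)$. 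I expect the main obstacle to be the identification in the second paragraph of the right $\Sigma \mathsf S$-module $T$ with the presheaf $\cat T(G(\thg), A)$, which requires careful unwinding of Proposition~\ref{prop:nerve} against the monad-to-theory construction; the remaining pieces are essentially formal manipulations of universal properties from the preceding sections.
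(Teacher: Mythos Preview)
Your proposal is correct and follows essentially the same route as the paper: the same chain (i)$\Leftrightarrow$(ii)$\Leftrightarrow$(iii)$\Leftrightarrow$(iv), the same identification of the right $\Sigma\mathsf S$-module $T$ with the presheaf $\cat T(G\thg,A)$ via Proposition~\ref{prop:nerve}, the same appeal to Proposition~\ref{prop:fsetreltensors} for (ii)$\Leftrightarrow$(iii), and the same coend manipulation~\eqref{eq:calculation} together with $\cat T(Gn,A^k)\cong\cat T(Gn,A)^k$ for (iii)$\Leftrightarrow$(iv). The only difference is packaging: for (i)$\Leftrightarrow$(ii) the paper writes $X\cong\Lan_\eta(X\eta)$ via~\cite[Theorem~5.35]{Kelly1982Basic} and then uses composition of Kan extensions and full faithfulness of $\eta$, whereas you phrase the same content as transport through the biadjunction of Proposition~\ref{prop:rep-reflect}; since that biadjunction is precisely the free completion underlying Kelly's theorem, and restriction along $\eta$ is an equivalence carrying pointwise Kan extensions to pointwise Kan extensions, the two formulations coincide.
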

Once again, the hypotheses of this proposition will always be
satisfied when $\C$ is cartesian closed and cocomplete, so in
particular when $\C = \cat{Set}$.
\begin{proof}
  Assume (i). By~\cite[Theorem~5.35]{Kelly1982Basic}, we have $X \cong
  \Lan_\eta(X\eta)$, and so $\Lan_{RG}(X) \cong \Lan_{RG.\eta}(X\eta)
  \cong \Lan_{\eta.\Sigma F}(X\eta)$. As $\eta \colon \Sigma \mathsf T
  \to R \cat T$ is fully faithful, it follows that $(\Lan_{RG}X).\eta
  \cong \Lan_{\Sigma F}(X\eta)$, as required for (ii). Conversely,
  given (ii), the left Kan extension of $\Lan_{\Sigma F}(X\eta)$ along
  $\eta \colon \Sigma \mathsf T \to R \cat T$ exists, again
  by~\cite[Theorem~5.35]{Kelly1982Basic}, and is isomorphic to
  $\Lan_{RG}(X)$ as above, giving (i).

  We next show that (ii) $\Leftrightarrow$ (iii). By definition,
  $\Lan_{\Sigma F}(X\eta)$ has its value at the unique object of
  $\Sigma \mathsf T$ given by the relative tensor $T \otimes_{\Sigma
    \mathsf S} (X\eta)$; here $T$ is regarded as a right $\Sigma
  \mathsf S$-module via the action
  \[
  T \otimes S \xrightarrow{1 \otimes F} T \otimes T \xrightarrow{\mu}
  T\rlap{ .}
  \]
  Under the isomorphism of Proposition~\ref{prop:nerve}, this right
  module is easily seen to correspond to the presheaf $\cat T(G, A)
  \in [\cat S^\op, \cat{Set}]$ (with $A$ the distinguished object of
  $\cat T$); whence, by Proposition~\ref{prop:fsetreltensors},
  the relative tensor $T \otimes_{\Sigma \mathsf S} (X\eta)$ exists if
  and only if the conditions in (iii) hold.

  Finally, we show (iii) $\Leftrightarrow$ (iv). The value at $A^n$ of
  $\Lan_G(VX)$ is given by the (unenriched) weighted colimit $\cat
  T(G,A^n) \star VX$; but $\cat T(G,A^n) \cong (\cat T(G,A))^n$ and
  so, repeating the calculation in~\eqref{eq:calculation}, this
  weighted colimit can be computed as the conical colimit of $D^n
  \colon (G \downarrow A)^n \to \C$. Thus the existence and
  finite-power-preservation of $\Lan_G(VX)$ is equivalent to the
  existence and distributivity of $\mathrm{colim}\ D$, as required.
\end{proof}

 \bibliographystyle{acm}

\bibliography{bibdata}
 
\end{document}